\documentclass[final]{amsart}
\usepackage{graphicx}

\usepackage{amsmath}
\usepackage{amsfonts}
\usepackage{amssymb}
\usepackage{color}
\usepackage{srcltx}
\usepackage{fullpage}
\usepackage{hyperref}
\usepackage{mathrsfs}

 \newtheorem{thm}{Theorem}
 \newtheorem{cor}[thm]{Corollary}
 \newtheorem{lem}[thm]{Lemma}
 \newtheorem{prop}[thm]{Proposition}
 \theoremstyle{definition}
 \newtheorem{defn}[thm]{Definition}
 \theoremstyle{remark}
 
 \newtheorem{ex}[thm]{Example}

\newcommand{\eq} [1] {\begin{equation}\label{#1}\quad}
\newcommand{\en} {\end{equation}}
\newcommand{\scal}[1]{\langle#1\rangle}
\newcommand{\norm}[1]{\left\Vert#1\right\Vert}

\newcommand{\re}{\operatorname{Re}}
\newcommand{\im}{\operatorname{Im}}

\newcommand{\C}{\mathbb{C}}

\newcommand{\R}{\mathbb{R}}

\newcommand{\spoint}{seed}
\newcommand{\spoints}{seeds}

\newcommand{\associated}{base }
\newcommand{\ASSOCIATED}{BASE }

\definecolor{louis-comment-color}{rgb}{0.8, 0.58, 0.46}

\definecolor{kristin-comment-color}{rgb}{0.7, 0, 0.7}

\usepackage{soul}

\begin{document}
%
%

\title{Singularities of \ASSOCIATED polynomials and Gau--Wu numbers}

\author[Camenga]{Kristin A. Camenga}
\address{Department of Mathematics\\
Juniata College\\
Huntingdon, PA\\ USA}
\email{camenga@juniata.edu}
\author[Deaett]{Louis Deaett}
\address{Department of Mathematics and Computer Sciences\\Quinnipiac University\\USA}
\email{louis.deaett@quinnipiac.edu}
\author[Rault]{Patrick X. Rault}
\address{Department of Mathematics\\ University of Nebraska at Omaha\\
6001 Dodge Street\\
Omaha, NE 68182\\
USA}
\email{prault@unomaha.edu}
\author[Sendova]{\\ Tsvetanka Sendova}
\address{Department of Mathematics\\ Michigan State University\\
East Lansing, MI 48824\\ USA}
\email{tsendova@math.msu.edu}
\author[Spitkovsky]{Ilya M. Spitkovsky}
\address{Division of Science \\ New York University Abu Dhabi (NYUAD)\\ Abu Dhabi 129188\\
United Arab Emirates}
\email{ims2@nyu.edu, imspitkovsky@gmail.com}
\author[Yates]{Rebekah B. Johnson Yates}
\address{Department of Mathematics\\Houghton College\\1 Willard Ave.\\Houghton, NY 14744\\USA}
\email{rebekah.yates@houghton.edu}

\thanks{The work on this paper was prompted by the authors' discussions during a SQuaRE workshop in May 2013 and continued during a REUF continuation workshop in August 2017, both at the American Institute of Mathematics (AIM) and supported by the NSF.  AIM also provided financial support for an additional meeting finalizing this paper.  The third author [PXR] was partially supported by the National Science Foundation grant number DMS-148695 through the Center for Undergraduate Research in Mathematics (CURM).  The fifth author [IMS] was supported in part by Faculty Research funding from the Division of Science and Mathematics, New York University Abu Dhabi.}

\subjclass{Primary 15A60}

\keywords{Numerical range, field of values, $4\times 4$ matrices, Gau--Wu number, boundary generating curve, irreducible, singularity}


\dedicatory{}

\begin{abstract}
In 2013, Gau and Wu introduced a unitary invariant, denoted by $k(A)$, of an $n\times n$ matrix $A$, which counts the maximal number of orthonormal vectors $\textbf x_j$ such that the scalar products $\scal{A\textbf x_j,\textbf x_j}$ lie on the boundary of the numerical range $W(A)$. We refer to $k(A)$ as the Gau--Wu number of the matrix $A$. In this paper we take an algebraic geometric approach and consider the effect of the singularities of the base curve, whose dual is the boundary generating curve, to classify $k(A)$.  This continues the work of Wang and Wu \cite{WangWu13} classifying the Gau-Wu numbers for $3\times 3$ matrices. Our focus on singularities is inspired by Chien and Nakazato \cite{ChiNa12}, who classified $W(A)$ for $4\times 4$ unitarily irreducible $A$ with irreducible base curve according to singularities of that curve.  When $A$ is a unitarily irreducible $n\times n$ matrix, we give necessary conditions for $k(A) = 2$, characterize $k(A) = n$, and apply these results to the case of unitarily irreducible $4\times 4$ matrices. However, we show that knowledge of the singularities is not sufficient to determine $k(A)$ by giving examples of unitarily irreducible matrices whose base curves have the same types of singularities but different $k(A)$. In addition, we extend Chien and Nakazato's classification to consider unitarily irreducible $A$ with reducible base curve and show that we can find corresponding matrices with identical base curve but different $k(A)$. Finally, we use the recently-proved Lax Conjecture to give a new proof of a theorem of Helton and Spitkovsky \cite{HelSpit}, generalizing their result in the process.

 \end{abstract}

\maketitle

\section{Introduction}\label{s:pre}

For a matrix $A\in M_n(\C)$, the set of $n\times n$ matrices with complex entries, and the scalar product $\scal{\cdot,\cdot}$ on $\C^n$ along with the norm $\norm{\cdot}$ associated with it, the \emph{numerical range of $A$} is the set
\begin{equation*}
W(A)=\left\{\scal{A\textbf x,\textbf x}\colon \norm{\textbf x}=1, \textbf x\in \C^n\right\}.
\end{equation*}
The numerical range has been studied extensively, with much recent focus on characterizing $W(A)$ for certain classes of matrices, and on its connection with various invariants or algebraic objects associated with $A$. In 2013, Gau and Wu \cite{GauWu13}
 introduced the following unitary invariant of a matrix $A\in M_n(\mathbb C)$, which can be used to describe certain properties of the numerical range $W(A)$.

\begin{defn}\label{defn:GauWu}Let $A\in M_n(\C)$.  We define the \emph{Gau--Wu number of $A$}, denoted $k(A)$, to be the maximum size of an orthonormal set $\left\{\textbf x_1,\ldots,\textbf x_k\right\}\subset\C^n$ such that the values $\scal{A\textbf x_j,\textbf x_j}$ lie on $\partial W(A)$, the boundary of $W(A)$. \end{defn}

For a given $A\in M_n(\C)$, Gau and Wu observed that $2\leq k(A) \leq n$ \cite[Lemma 4.1]{GauWu13}.  Thus $k(A)=2$ for all $A\in M_2(\C)$. Further, Wang and Wu \cite[Proposition 2.11]{WangWu13} completely categorized the values of $k(A)$ when $n=3$ according to Kippenhahn's classification \cite{Ki, Ki08} of the shape of $W(A)$. Additionally, Lee \cite{Lee} classified $k(A)$ for unitarily reducible matrices through dimension $4$.

Inspired by Chien and Nakazato \cite{ChiNa12}, we use the following tools to study the numerical range.

Let $A\in M_n(\C)$ and let \[\re A = \left(A+A^{*}\right)/2 \quad\text{ and}\quad \im A =\left(A-A^*\right)/2i, \] where $A^*$ is the conjugate transpose of $A$.  For brevity, we will generally use the notation $H_1=\re A$ and $H_2=\im A$. Note that $H_1$ and $H_2$ are both Hermitian;  $H_1$ is called the Hermitian part of $A$ and $iH_2$ is called the skew-Hermitian part of $A$.

\begin{defn}\label{defn:assoccurve}
Let
\begin{equation*}
F_A(x:y:t) = \det\left(xH_1 + yH_2+tI_n\right),
\end{equation*}
 a homogeneous  polynomial of degree $n$, and let $\Gamma_{F_A}$ denote the curve $F_A(x:y:t)=0$ in projective space $\mathbb C\mathbb{P}^2$.  We call $F_A$ the {\em \associated polynomial} and $\Gamma_{F_A}$ the {\em \associated curve} for the matrix $A$; when the matrix is clear, we will suppress the subscripted $A$. Let $\Gamma^\wedge_{F}$ denote the dual curve of $\Gamma_F$;  we call $\Gamma^\wedge_{F}$ the {\em boundary generating curve} of $W(A)$. \end{defn}

Note that the real affine points on the boundary generating curve $\Gamma^\wedge_{F}$ have a natural embedding into $\C$. Kippenhahn showed that the numerical range $W(A)$ is the convex hull of this curve \cite{Ki, Ki08}.

Chien and Nakazato used singularities to classify the irreducible base curves of $4\times 4$ matrices \cite{ChiNa12}. Naturally, such matrices are unitarily irreducible.  In this paper, we study the relationship between $k(A)$ and singularities of the \associated curve.

In Section \ref{s:projective}, we include a primer for the projective geometry required to work with singularities of \associated curves. In Section \ref{s:lemmas}, we generalize the concept of flat portions on the boundary of the numerical range
and, for $n\times n$ unitarily irreducible matrices $A$, give necessary conditions for $k(A)=2$ and characterize $k(A) = n$ in terms of singularities.
 In Section \ref{s:sing}, we apply these results to consider the effect of singularities on $k(A)$ for any $4\times 4$ matrix $A$.
 In particular, we show that a certain type of singularity is sufficient, but not necessary, to imply $k(A)>2$.
In Section \ref{s:factor}, we consider unitarily irreducible matrices $A$ with reducible \associated polynomials, extending Chien and Nakazato's characterization in \cite{ChiNa12}.   Throughout the paper, we give examples of matrices with different singularities and values of $k(A)$.

\section{Projective geometry}\label{s:projective}

Since Chien and Nakazato's characterization of $4\times 4$ matrices is based on the singularities of their \associated curves, in this section we translate well-known results into an algebraic geometry setting.

First we recall some basic facts from projective geometry. A point in the projective plane  ${\mathbb C}{\mathbb{P}^2}$ corresponds to a line in its dual plane  ${\mathbb C}{\mathbb{P}^2}^*$ and vice versa. Furthermore, according to the duality principle, a point $\left(x_1 : y_1 : t_1\right)$ lies on a line $a_1x+b_1y+c_1t=0$ in ${\mathbb C}{\mathbb{P}^2}$ if and only if the point $\left(a_1 : b_1 : c_1\right)$ lies on the line $x_1a + y_1b + t_1c=0$ in  ${\mathbb C}{\mathbb{P}^2}^*$. Thus two points $(x_1:y_1:t_1)$ and $(x_2:y_2:t_2)$ lie on a line $a_1x+b_1y+c_1t=0$ in ${\mathbb C}{\mathbb{P}^2}$ if and only if the lines $x_1a + y_1b + t_1c=0$  and $x_2a + y_2b + t_2c=0$ intersect at $\left(a_1 : b_1 : c_1\right)$ in  ${\mathbb C}{\mathbb{P}^2}^*$.

Chien and Nakazato mention the following result but omit a proof \cite[Remark 2.2]{ChiNa12}. This result is proved by Shapiro \cite[Proposition 1]{Shap1}, but we give a different justification.

\begin{lem}\label{lem:real}
    The base polynomial $F_A$ for $A\in M_n(\C)$ has only real coefficients.
\end{lem}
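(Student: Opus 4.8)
The plan is to show directly that each coefficient of the homogeneous polynomial
\[
F_A(x:y:t) = \det\left(xH_1 + yH_2 + tI_n\right)
\]
is real, using the fact that $H_1 = \re A$ and $H_2 = \im A$ are Hermitian matrices. The key observation is that for \emph{real} values of the parameters $x$, $y$, and $t$, the matrix $xH_1 + yH_2 + tI_n$ is itself Hermitian, since a real linear combination of Hermitian matrices is Hermitian. Because the determinant of a Hermitian matrix is always real (its eigenvalues are real, so their product is real), it follows that $F_A(x:y:t)$ takes real values at every real point $(x,y,t) \in \R^3$.

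The main step is then to argue that a homogeneous polynomial (equivalently, after dehomogenizing, an ordinary polynomial) in several variables that takes only real values on all of $\R^3$ must have real coefficients. First I would expand the determinant to write $F_A(x:y:t) = \sum_{i+j+k=n} c_{ijk}\, x^i y^j t^k$ for some coefficients $c_{ijk} \in \C$. Evaluating at real arguments and separating into real and imaginary parts, the imaginary part $\sum_{i+j+k=n} \im(c_{ijk})\, x^i y^j t^k$ is a real polynomial vanishing identically on $\R^3$; since a polynomial over an infinite field vanishing on all of $\R^3$ is the zero polynomial, every $\im(c_{ijk}) = 0$, giving $c_{ijk} \in \R$ as desired.

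I expect the only point requiring any care to be the last identity-theorem step, namely confirming that a polynomial vanishing on all real points has all coefficients zero; this is standard, following by induction on the number of variables from the one-variable fact that a nonzero polynomial has finitely many roots. The route through Hermiticity of the evaluated matrix is what makes the argument clean and avoids any explicit manipulation of the determinant expansion, which is why I would organize the proof around the observation that $F_A$ is a determinant of a Hermitian matrix at every real point.
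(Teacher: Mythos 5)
Your proposal is correct and follows essentially the same route as the paper: observe that $xH_1+yH_2+tI_n$ is Hermitian for real $(x,y,t)$, hence $F_A$ is real-valued on $\R^3$, and conclude that its coefficients are real. You simply spell out the final identity-theorem step (a polynomial vanishing on all of $\R^3$ is zero) that the paper leaves implicit.
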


\begin{proof}
    Let $(x,y,t) \in \R^3$.  Then $xH_1 +yH_2 +t I_n$ is Hermitian and so can have only real eigenvalues. Thus its determinant $F_A(x, y, t)$ can only be
    real. Hence $F_A$ is a polynomial function from $\R^3$ to $\R$.  It follows that all coefficients of this polynomial must be real.
\end{proof}

The following lemma shows that rotating a matrix corresponds to applying the same rotation to the \associated curve.

\begin{lem}\label{lem:angles}
Let $A\in M_n(\C)$ and let $F_A$ denote the \associated polynomial for the matrix $A$.  Fix an angle $\theta$ and let $A' = e^{i\theta}A$, $x'=\re\big((x+iy)e^{i\theta}\big)$, $y'=\im\big((x+iy)e^{i\theta}\big)$, and $t'=t$.  Then $F_A(x:y:t)=F_{A'}\left(x':y':t'\right)$.  That is, the real affine part of the \associated curve for the rotated matrix $A'=e^{i\theta}A$ is a rotation by $\theta$ of the real affine part of the \associated curve for $A$.  Furthermore, the line $y'=0$ intersecting the curve $F_{A'}\left(x':y':t'\right)=0$ is the image under this rotation of the line $y=-x\tan\theta$ intersecting $F_A\left(x:y:t\right)=0$.
\end{lem}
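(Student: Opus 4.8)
The plan is to reduce both the polynomial identity and the two geometric assertions to a single observation: that the coefficient vector $(x,y)$ and the pair of Hermitian matrices $(H_1,H_2)$ are transformed by the \emph{same} rotation, whose orthogonality makes their bilinear pairing invariant. First I would compute the Hermitian and skew-Hermitian parts of the rotated matrix $A'=e^{i\theta}A$. Writing $c=\cos\theta$ and $s=\sin\theta$, using $e^{\pm i\theta}=c\pm is$, and substituting the definitions $H_1=(A+A^*)/2$ and $H_2=(A-A^*)/2i$, a short computation gives
\[
\re A' = cH_1 - sH_2, \qquad \im A' = sH_1 + cH_2.
\]
Equivalently, $(\re A', \im A')$ is obtained from $(H_1,H_2)$ by applying the rotation matrix $R_\theta=\begin{pmatrix} c & -s \\ s & c\end{pmatrix}$.

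Next I would record the coordinate change. Expanding $(x+iy)e^{i\theta}=(xc-ys)+i(xs+yc)$ shows that $x'=xc-ys$ and $y'=xs+yc$, so $(x',y')$ is likewise obtained from $(x,y)$ by the same rotation $R_\theta$. The heart of the argument is then that $x'\re A' + y'\im A'$ is the bilinear pairing of the transformed vector $(x',y')$ with the transformed pair $(\re A', \im A')$; treating $H_1,H_2$ as the entries of a vector of matrices, this reads
\[
x'\re A' + y'\im A' = \bigl(R_\theta (x,y)^{T}\bigr)^{T} R_\theta (H_1,H_2)^{T} = (x,y)\, R_\theta^{T} R_\theta\, (H_1,H_2)^{T} = xH_1 + yH_2,
\]
using only the orthogonality $R_\theta^{T}R_\theta = I$. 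Since $t'=t$, adding $t'I_n$ and taking determinants yields $F_{A'}(x':y':t')=\det(xH_1+yH_2+tI_n)=F_A(x:y:t)$, which is the first assertion.

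For the geometric statements, I would interpret the substitution $(x,y)\mapsto(x',y')=R_\theta(x,y)$ as a rotation by $\theta$ of the real affine plane. The identity just proved says that a real affine point $(x,y)$ lies on $\Gamma_{F_A}$ if and only if its rotated image $(x',y')$ lies on $\Gamma_{F_{A'}}$, so the real affine part of $\Gamma_{F_{A'}}$ is exactly the rotation by $\theta$ of that of $\Gamma_{F_A}$. Finally, solving $y'=xs+yc=0$ for $y$ (assuming $\cos\theta\neq 0$) gives $y=-x\tan\theta$, so the line $y'=0$ pulls back under the rotation to the line $y=-x\tan\theta$; hence their respective intersections with the two curves correspond under the rotation, as claimed.

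I expect no serious obstacle here: the only care needed is bookkeeping with the trigonometric identities and the sign conventions when computing $\re A'$ and $\im A'$. The conceptual step that avoids a messy direct expansion of $x'\,\re A' + y'\,\im A'$ is recognizing that both transformations are the single orthogonal matrix $R_\theta$, after which invariance of the pairing is immediate.
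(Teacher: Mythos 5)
Your proposal is correct and follows essentially the same route as the paper: the paper verifies $x'\re A' + y'\im A' = xH_1+yH_2$ by direct expansion using $\cos^2\theta+\sin^2\theta=1$, which is precisely the orthogonality of your $R_\theta$, and then reads off the line $y=-x\tan\theta$ from $y'=x\sin\theta+y\cos\theta=0$ exactly as you do. Your packaging via the invariance of the pairing under a common orthogonal transformation is a slightly cleaner way to organize the identical computation.
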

\begin{proof}
We first show that the \associated polynomials for $A$ and $A'$ are the same:

\begin{align*}
F_{A'}\left(x':y':t'\right)& =F_{\left(H_1+iH_2\right)(\cos\theta + i\sin\theta)}\left(x':y':t'\right)\\
&=F_{\left(H_1\cos\theta-H_2\sin\theta\right)+i\left(H_1\sin\theta+H_2\cos\theta\right)}(x\cos\theta-y\sin\theta:x\sin\theta+y\cos\theta:t)\\
&=\det\big((x\cos\theta-y\sin\theta)(H_1\cos\theta-H_2\sin\theta)+(x\sin\theta+y\cos\theta)(H_1\sin\theta+H_2\cos\theta)+tI\big)\\
&=\det\big(x(\cos^2\theta+\sin^2\theta)H_1+y(\sin^2\theta+\cos^2\theta)H_2+tI\big)\\
&=F_A(x:y:t).
\end{align*}

Next, note that since $y'=x\sin\theta+ y\cos\theta$, the line $y'=0$ is equivalent to $x\sin\theta + y\cos\theta =0$. Hence solving for $y$ yields the line $y=-x\tan\theta$.
\end{proof}

We take the standard definition for the order of a curve $\mathcal C$ at a point $P$, denoted $\textmd{ord}_P(\mathcal C)$, which includes that the order is 1 if and only if $\mathcal C$ is nonsingular at $P$ \cite{ChiNa12}.  Similarly, we take the standard definition of the intersection multiplicity of two curves, which includes that the order of an intersection between a curve $\mathcal C$ and a non-tangent line at a point $P$ is $\textmd{ord}_P(\mathcal C)$.  Lastly, for the next lemma we recall the following concept:  a line $\ell$ is a support line of the numerical range $W(A)$ if and only if $\ell\cap \partial W(A)$ is either a line segment or a point \cite{Ki, Ki08}.

\begin{lem}\label{lem:mult}
Let $A \in M_n(\C)$ be a unitarily irreducible matrix and let $A=H_1+iH_2$, where $H_1,H_2$ are Hermitian matrices.  Let $F(x:y:t)=\det\left(xH_1+yH_2+tI\right)$.  The multiplicity of an eigenvalue $\lambda$ of $H_1$ is $m$ if and only if the \associated curve $\Gamma_F: F(x:y:t)=0$ has order $m$ at the point $(1:0:-\lambda)$ (lying on the line $\ell: y=0$).
  Furthermore, $\lambda_1$ and $\lambda_2$ are the minimum and maximum eigenvalues of $H_1$ if and only if the duals of the points $(1:0:-\lambda_1)$ and $(1:0:-\lambda_2)$ are respectively the left and right vertical support lines of $W(A)$.
\end{lem}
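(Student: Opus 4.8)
The plan is to treat the two assertions separately, establishing the statement about the order of $\Gamma_F$ first and the statement about vertical support lines second.

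For the order statement, I would dehomogenize at $P=(1:0:-\lambda)$ by setting $x=1$, so that the local defining polynomial is $f(y,t)=\det(H_1+yH_2+tI)$ and $P$ becomes the affine point $(y,t)=(0,-\lambda)$. Writing $t=-\lambda+s$ and $B=H_1-\lambda I$, the local equation is $f=\det(B+sI+yH_2)$, and $\textmd{ord}_P(\Gamma_F)$ is by definition the lowest total degree in $(s,y)$ of a nonzero term of this determinant. Since $\lambda$ is an eigenvalue of $H_1$ of multiplicity $m$, the Hermitian matrix $B$ has $0$ as an eigenvalue of multiplicity $m$. Passing to an orthonormal eigenbasis of $H_1$ (a unitary change of basis, which leaves $F$ and hence the order unchanged) I may assume $B=\diag(\beta_1,\ldots,\beta_n)$ with exactly $m$ of the $\beta_j$ equal to $0$; let $K$ be that index set and let $C=H_2$ in this basis. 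The multilinear (cofactor) expansion of a diagonal-plus-perturbation determinant then gives
\[
f(y,s)=\sum_{S\subseteq\{1,\ldots,n\}} y^{|S|}\det\!\big(C[S,S]\big)\prod_{j\notin S}(\beta_j+s),
\]
where $C[S,S]$ is the principal submatrix on the index set $S$.

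The heart of the matter is a degree count in this sum. For $j\notin S$, the factor $\beta_j+s$ contributes a factor of $s$ to its lowest-order term exactly when $j\in K$, so the $S$-term has lowest total degree $|S|+(m-|K\cap S|)\ge m$, with equality precisely when $S\subseteq K$. Hence every term has degree at least $m$, so $\textmd{ord}_P(\Gamma_F)\ge m$. Collecting the terms of degree exactly $m$ (those with $S\subseteq K$) yields $\big(\prod_{j\notin K}\beta_j\big)\det\!\big(sI_m+y\,C[K,K]\big)$, which is a nonzero homogeneous form of degree $m$ since $\prod_{j\notin K}\beta_j\ne 0$ and the displayed determinant is monic of degree $m$ in $s$. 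Therefore $\textmd{ord}_P(\Gamma_F)=m$ whenever $\lambda$ is an eigenvalue of multiplicity $m$; and if $\lambda$ is not an eigenvalue then $F(1:0:-\lambda)=\prod_j(\mu_j-\lambda)\ne 0$, so $P\notin\Gamma_F$. This yields the claimed equivalence. I expect this degree bookkeeping, together with the check that the degree-$m$ part does not vanish, to be the main obstacle. A useful byproduct is that the coefficient of $s^m$ in the degree-$m$ form is $1$, so the line $\ell:y=0$ is not tangent to $\Gamma_F$ at $P$, which is exactly the condition under which the intersection multiplicity of $\ell$ with $\Gamma_F$ at $P$ computes $\textmd{ord}_P(\Gamma_F)$.

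For the support-line statement, I would first compute the dual of $(1:0:-\lambda)$ directly from the duality principle recalled above: it is the line $x-\lambda t=0$, which in the affine chart $t=1$ used to embed the boundary generating curve into $\C$ is the vertical line $\re z=\lambda$. It then remains to locate the vertical support lines of $W(A)$. Since $\re\scal{A\mathbf x,\mathbf x}=\scal{H_1\mathbf x,\mathbf x}$ and $H_1$ is Hermitian, the projection of $W(A)$ onto the real axis is $W(H_1)=[\mu_{\min},\mu_{\max}]$, where $\mu_{\min}$ and $\mu_{\max}$ are the least and greatest eigenvalues of $H_1$. Consequently the left and right vertical support lines of $W(A)$ are $\re z=\mu_{\min}$ and $\re z=\mu_{\max}$, namely the duals of $(1:0:-\mu_{\min})$ and $(1:0:-\mu_{\max})$. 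Matching these against the duals of $(1:0:-\lambda_1)$ and $(1:0:-\lambda_2)$ gives the stated equivalence in both directions. I note that unitary irreducibility, the standing hypothesis, is not actually needed for either computation.
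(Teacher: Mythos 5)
Your proof is correct, and the first half takes a genuinely different route from the paper's. The paper never opens up the determinant: it first argues from Kippenhahn's theorem (that $W(A)$ misses the line at infinity) that $\ell: y=0$ is not tangent to $\Gamma_F$, so that $\operatorname{ord}_P(\Gamma_F)$ equals the intersection multiplicity of $\ell$ with $\Gamma_F$ at $P$; it then applies B\'ezout's theorem to $\Gamma_F\cap\ell$ and matches the $n$ intersection points of $\det(xH_1+tI)=0$ with the eigenvalues of $H_1$, counted with algebraic multiplicity. Your argument instead computes the local lowest-degree form directly via the expansion $\det(D+yC)=\sum_S y^{|S|}\det(C[S,S])\prod_{j\notin S}D_{jj}$, and the degree bookkeeping is right: every term has degree at least $m$, and the degree-$m$ part $\bigl(\prod_{j\notin K}\beta_j\bigr)\det\bigl(sI_m+yC[K,K]\bigr)$ is visibly nonzero because it is monic of degree $m$ in $s$. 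What your approach buys is self-containedness: you get the tangent cone explicitly, and your byproduct observation that the coefficient of $s^m$ is $1$ \emph{proves} the non-tangency of $\ell$ at $P$ that the paper has to import from Kippenhahn via a duality argument; you also avoid B\'ezout entirely and correctly note that unitary irreducibility is never used. What the paper's route buys is brevity given the cited background, and the non-tangency/intersection-multiplicity framing it sets up is reused later (e.g., in the proof of Proposition \ref{prop:boundary2}). Your treatment of the support-line statement, via $\re W(A)=W(H_1)=[\mu_{\min},\mu_{\max}]$ and the duality computation sending $(1:0:-\lambda)$ to the line $\re z=\lambda$, is essentially identical to the paper's.
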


\begin{proof}
First, from \cite[Theorem 17]{Ki08}, we have that $W(A)$ does not intersect the line at infinity $t=0$.  Hence $\hat \ell=(0:1:0)$ cannot be a point in $W(A)$, and so $\ell: y=0$ is not tangent to $\Gamma_F$.  Similarly, the dual of $t=0$, which is the point $(0:0:1)$, cannot lie on $\Gamma_F$. Thus $\Gamma_F$ does not contain the origin of the affine plane, so $\Gamma_F \cap \ell\cap \{(x:y:t)\colon x=0\} = \varnothing$.

Let $P$ denote a point on both the curve $\Gamma_F$ and the line $\ell$, and recall that $\operatorname{ord}_P\left(\Gamma_F\right)$ denotes the order of $\Gamma_F$ at $P$. Then we have $\operatorname{ord}_P\left(\Gamma_F\right)>1$ if and only if $P$ is a singular point of $\Gamma_F$.  Since $\ell$ is not tangent to $\Gamma_F$, we see that $\operatorname{ord}_P\left(\Gamma_F\right)$ must be equal to the intersection multiplicity of $\Gamma_F$ with $\ell$ at $P$.

By B\'ezout's Theorem, the line $\ell$ intersects $\Gamma_F$ at $n$ points, counting intersection multiplicity.  We see that $\Gamma_F\cap \ell =\left\{(x:0:t) : \det\left(xH_1+tI\right)=0 \mbox{ and } x\ne 0\right\}$. So there is a one-to-one correspondence between these intersection points $(x:0:t)$ (with some intersection multiplicity $m$) and the eigenvalues $-t/x$ (with algebraic multiplicity the same $m$) of $H_1$. Thus an eigenvalue $\lambda$ of algebraic multiplicity $m$ corresponds to a point $(1:0:-\lambda)$ of intersection multiplicity $m$ on $\Gamma_F\cap \ell$.
By the preceding paragraph, this is equivalent to being a point of $\ell$ which is a singularity of order $m$ of the curve $\Gamma_F$.

Let $\lambda_1$ and $\lambda_2$ be the minimum and maximum eigenvalues, respectively, of $H_1$.  Since $[\lambda_1,\lambda_2]=W\left(H_1\right)=\operatorname{Re}(W(A))$, we conclude that $x=\lambda_1$ and $x=\lambda_2$ are vertical parallel support lines of $W(A)$.  The duals of these lines are $\left(1:0:-\lambda_1\right)$ and $\left(1:0:-\lambda_2\right)$ respectively.  The converse holds similarly.
\end{proof}

\section{Maximal and minimal $k(A)$ for $n\times n$ matrices}\label{s:lemmas}
In this section we will consider a generalization of flat portions on the boundary of the numerical range, which we will call \spoints{}, and how these affect the Gau--Wu number of an $n\times n$ matrix $A$.  In particular, we will use the notion of \spoints{} to give necessary conditions for when $k(A)=2$ and a characterization of when $k(A) = n$.  We begin by relating the eigenvalues of $A$ to the Gau--Wu number $k(A)$.

\begin{lem}\label{lem:k=2a}   Let $A\in M_n(\C)$ and assume that $A$ is not a $2\times 2$ normal matrix. If $k(A)=2$, then for all angles $\phi$, the maximum eigenvalue of $\re(e^{-i\phi}A)$ has multiplicity $1$.
\end{lem}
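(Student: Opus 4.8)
The plan is to prove the contrapositive: assuming $A$ is not a $2\times 2$ normal matrix, I will show that if some $\re(e^{-i\phi}A)$ has a maximum eigenvalue of multiplicity at least $2$, then $k(A)\geq 3$ (so that $k(A)\neq 2$). First I would reduce to the case $\phi=0$. Since $W(e^{-i\phi}A)=e^{-i\phi}W(A)$ is merely a rotation of $W(A)$ and carries boundary points to boundary points, an orthonormal set witnessing $k$ for $A$ is also one for $e^{-i\phi}A$; hence $k(e^{-i\phi}A)=k(A)$, and $e^{-i\phi}A$ is $2\times 2$ normal exactly when $A$ is. Replacing $A$ by $e^{-i\phi}A$, I may therefore assume the maximum eigenvalue $\lambda_{\max}$ of $H_1=\re A$ has multiplicity $m\geq 2$.

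The core observation is that every unit vector in an extreme eigenspace of $H_1$ produces a boundary point of $W(A)$. Indeed, $\re\scal{A\mathbf x,\mathbf x}=\scal{H_1\mathbf x,\mathbf x}$, so this real part is maximized (resp. minimized) over unit $\mathbf x$ precisely on $V:=\ker(H_1-\lambda_{\max}I)$ (resp. on $V':=\ker(H_1-\lambda_{\min}I)$); by Lemma~\ref{lem:mult} these eigenspaces correspond to the right and left vertical support lines of $W(A)$, so the associated values $\scal{A\mathbf x,\mathbf x}$ lie on $\partial W(A)$. Choosing any two orthonormal vectors $\mathbf u_1,\mathbf u_2\in V$ thus already gives two boundary values, and I only need to manufacture a third orthonormal vector carrying a boundary value.

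The third vector is where the one delicate case arises, and I expect it to be the main obstacle. If $\dim V=m\geq 3$ I simply take $\mathbf u_3\in V$ orthonormal to $\mathbf u_1,\mathbf u_2$, and it automatically yields a boundary point. The difficulty is $m=2$, when $V$ is exhausted by $\mathbf u_1,\mathbf u_2$ and the third vector must come from outside $V$; here I would use $V'$. Because $V$ and $V'$ are eigenspaces of the Hermitian matrix $H_1$ for distinct eigenvalues, they are orthogonal, so any unit $\mathbf u_3\in V'$ is orthogonal to $\mathbf u_1,\mathbf u_2$ and contributes a third boundary value, giving $k(A)\geq 3$. This works provided $\lambda_{\min}\neq\lambda_{\max}$, i.e. $H_1$ is not scalar. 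The only way it can fail is $\lambda_{\min}=\lambda_{\max}$, which forces $V=\C^n$ and hence $n=m=2$; but then $H_1=\lambda_{\max}I_2$, so $A=\lambda_{\max}I_2+iH_2$ is normal, the excluded case. Thus the hypothesis that $A$ is not $2\times 2$ normal is invoked in exactly one place, and precisely to rule out the lone configuration in which the third orthonormal boundary vector cannot be produced.
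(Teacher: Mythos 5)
Your proof is correct and follows essentially the same route as the paper's: reduce to $\phi=0$ by rotation invariance of $k(A)$, produce two orthonormal boundary vectors from the multiplicity-$\geq 2$ maximal eigenspace of $\re A$, and obtain a third either from that eigenspace (when its dimension is at least $3$) or from the orthogonal minimal eigenspace, with the excluded $2\times 2$ normal case being exactly the configuration where neither source is available. One small caution: you cite Lemma~\ref{lem:mult} for the support-line fact, but that lemma assumes unitary irreducibility, which the present statement does not; the fact you actually need, namely $\re W(A)=W(\re A)=[\lambda_{\min},\lambda_{\max}]$ so that points achieving the extreme real parts lie on $\partial W(A)$, is elementary and should be invoked directly, as the paper does implicitly.
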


\begin{proof}
 Assume $\phi$ is an angle for which the maximum eigenvalue $\lambda_{\operatorname{max}}$ of $\re(e^{-i\phi}A)$ has a multiplicity of at least 2. 	Since $k(A)$ is invariant under rotation, we will assume without loss of generality that $\phi =0.$ Then there exist orthogonal unit vectors $\textbf v_1$ and $\textbf v_2$ with $\re A \textbf v_j=\lambda_{\operatorname{max}}\textbf v_j$, for $j=1,2$. This implies that $\langle A\textbf v_1, \textbf v_1\rangle$ and $\langle A\textbf v_2, \textbf v_2\rangle$ are points on $\partial W{\left(A\right)}\cap \ell$, where $\ell$ is the line with equation $x=\lambda_{\operatorname{max}}$.

Let $\lambda_{\operatorname{min}}$ be the smallest eigenvalue of $\re A$.
Since $\re A$ is Hermitian, if $\lambda_{\operatorname{min}}=\lambda_{\operatorname{max}}$, then $\re A$ has only a single eigenvalue and hence is a scalar matrix.  It follows that $A$ itself is normal.  By assumption, then, $n \ge 3$.  Thus there exists a unit eigenvector $\textbf v_3$ of $A$, corresponding to the eigenvalue $\lambda_{\operatorname{min}}=\lambda_{\operatorname{max}}$, that is orthogonal to both $\textbf v_1$ and $\textbf v_2$. Since $\re A\textbf v_3=\lambda_{\operatorname{max}}\textbf v_3$, we have that $\langle A\textbf v_3, \textbf v_3\rangle$ is a third point on $\partial W{\left(A\right)}\cap \ell$.  Therefore, in this case $k(A) \ge 3$.

Now suppose $\lambda_{\operatorname{min}}$ and $\lambda_{\operatorname{max}}$ are distinct.
Since $\re A$ is Hermitian, the eigenspace corresponding to $\lambda_{\operatorname{min}}$ is orthogonal to $\operatorname{span}\left\{\textbf v_1, \textbf v_2\right\}$. Let $\textbf v_3$ be a unit vector in the eigenspace of $\lambda_{\operatorname{min}}$. Then $\langle A\textbf v_3, \textbf v_3\rangle$ is a point on $\partial W{\left(A\right)}$ and again $k(A)\geq 3$.

\end{proof}

The following definition generalizes the concept of a flat portion on the boundary of the numerical range to include a degenerate case.

\begin{defn}\label{defn:seed}
    Let $A\in M_n(\C)$ with boundary generating curve $\Gamma^\wedge_F$.  A \emph{\spoint} of $A$ is a subset of $\partial W(A)$ of one of the following types:
    \begin{itemize}
        \item \textbf{Flat portion:}  the intersection between $\partial W(A)$ and a support line of $W(A)$ that contains more than one point;
        \item \textbf{Singular point:}  a point of $\Gamma^\wedge_F\cap \partial W(A)$ which is a singular point of $\Gamma^\wedge_F$.
    \end{itemize}
\end{defn}

While a seed is associated with the algebraic object of a matrix, it is describing a geometric feature of the corresponding numerical range and its underlying boundary generating curve $\Gamma_F^\wedge$.  Also, note that the dual of the support line of $W(A)$ at a seed is a singularity of the base curve $\Gamma_F$.

We will now recall the definition of a related concept from Wang and Wu \cite{WangWu13}.  For any $P\in W(A)$, they define $\mathcal H_P:=\textmd{span}\left\{\mathbf x\in \mathbb C^n\ : \ \langle A\mathbf x,\mathbf x\rangle=P||\mathbf x||^2\right\}$.  They then use this set to classify some of the same cases of $k(A)$ as we do in this paper. The following proposition makes the connection between seeds and $\mathcal{H}_P$.

\begin{prop}\label{prop:WangWuconnect} Let $A$ be a unitarily irreducible matrix of dimension at least 2 and let $P\in \partial W(A)$.  Then $\dim \mathcal H_P>1$ if and only if $P$ is on a seed of $A$.
\end{prop}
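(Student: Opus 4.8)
The plan is to compute $\mathcal{H}_P$ explicitly by reducing to a single Hermitian eigenspace, and then to match the two mechanisms that can force $\dim\mathcal{H}_P>1$ against the two types of seed in Definition \ref{defn:seed}. First I would normalize: since $P\in\partial W(A)$ there is a support line $\ell$ of $W(A)$ through $P$, and by Lemma \ref{lem:angles} I may rotate $A$ (which carries seeds to seeds and $\mathcal{H}_P$ to the corresponding span) so that $\ell$ is the vertical line $x=\lambda_{\max}$, where $\lambda_{\max}$ is the largest eigenvalue of $H_1=\re A$. The first key step is to identify the generating vectors: if $\scal{A\mathbf{x},\mathbf{x}}=P\norm{\mathbf{x}}^2$, then taking the real part gives $\scal{H_1\mathbf{x},\mathbf{x}}=\lambda_{\max}\norm{\mathbf{x}}^2$, so the Rayleigh quotient of $H_1$ is maximal and $\mathbf{x}\in E:=\ker(H_1-\lambda_{\max}I)$. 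Writing $B$ for the compression of $H_2$ to $E$ and $\nu=\im P$, on $E$ we have $\scal{A\mathbf{x},\mathbf{x}}=\lambda_{\max}\norm{\mathbf{x}}^2+i\scal{B\mathbf{x},\mathbf{x}}$, so that
\[
\mathcal{H}_P=\operatorname{span}\left\{\mathbf{x}\in E:\ \scal{B\mathbf{x},\mathbf{x}}=\nu\norm{\mathbf{x}}^2\right\}.
\]
By Lemma \ref{lem:mult}, $\dim E$ equals the order of $\Gamma_F$ at the point dual to $\ell$, and $\ell\cap W(A)$ is the segment $\{\lambda_{\max}+i\beta:\beta\in[\beta_{\min},\beta_{\max}]\}$, where $\beta_{\min},\beta_{\max}$ are the extreme eigenvalues of $B$; since $P\in\partial W(A)\cap\ell$ we have $\nu\in[\beta_{\min},\beta_{\max}]$.

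Next I would run the case analysis on the Hermitian form $B-\nu I$. If $\nu$ lies strictly between $\beta_{\min}$ and $\beta_{\max}$, then $B-\nu I$ is indefinite and its isotropic vectors span all of $E$ (pairing an eigenvector of positive sign with one of negative sign and using both relative phases recovers each basis vector), so $\mathcal{H}_P=E$; here $\beta_{\min}<\beta_{\max}$ forces $\dim E\ge 2$ and places $P$ in the relative interior of a flat portion, matching $\dim\mathcal{H}_P>1$ with the flat-portion case of a seed. If instead $B$ is scalar, so $\ell$ meets $W(A)$ only at $P$, then again $\mathcal{H}_P=E$, and $\dim\mathcal{H}_P>1$ exactly when $\dim E\ge 2$; by Lemma \ref{lem:mult} this is exactly when the point dual to $\ell$ is a singular point of $\Gamma_F$, which by the duality recorded in Section \ref{s:projective} corresponds to $P$ being a singular point of $\Gamma_F^\wedge$, i.e.\ the singular-point case of a seed. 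The complementary, trivial regime is $\dim E=1$: then $B$ is a scalar, $\ell$ meets $W(A)$ only at the smooth point $P$, $\mathcal{H}_P=E$ is one-dimensional, and $P$ is a nonsingular point of $\Gamma_F^\wedge$ lying on no flat portion, hence not a seed.

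The remaining and most delicate situation is when $\nu$ equals an endpoint $\beta_{\max}$ (or $\beta_{\min}$) of a nondegenerate segment: then $\mathcal{H}_P$ is exactly the $B$-eigenspace for $\beta_{\max}$, and I must show its dimension exceeds $1$ if and only if $P$ is a seed. This endpoint analysis is the main obstacle, and it is precisely where the dictionary between base-curve singularities (Lemma \ref{lem:mult}) and the local type of $\Gamma_F^\wedge$ at $P$ must be used carefully. In the forward direction this is clean: a repeated extreme eigenvalue of $B$ yields a high-order contact of $\ell$ with $\Gamma_F^\wedge$ at $P$, forcing $P$ to be a singular point of $\Gamma_F^\wedge$ and hence a seed. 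The harder direction is to verify that a seed always delivers $\dim\mathcal{H}_P>1$ at such endpoints, which requires showing that the endpoint configurations with a simple extreme eigenvalue of $B$ are exactly the smooth (simple-tangent) contact points and are accounted for correctly by the flat-portion/singular-point dichotomy. Establishing this duality—translating the order of the point of $\Gamma_F$ dual to $\ell$ into the pattern of contacts of $\ell$ with $\Gamma_F^\wedge$, and thereby into the multiplicity structure of $B$ on $E$—is the crux; once it is in place, assembling the four cases yields the stated equivalence.
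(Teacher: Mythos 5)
Your setup is sound and, in fact, makes explicit what the paper handles by citation: reducing to $E=\ker(H_1-\lambda_{\max}I)$ is \cite[Proposition 2.2(a)]{WangWu13}, and your identification $\mathcal H_P=\operatorname{span}\{\mathbf x\in E:\scal{B\mathbf x,\mathbf x}=\nu\norm{\mathbf x}^2\}$ together with the indefinite/scalar/one-dimensional cases is handled correctly. But the argument is not a proof, because you explicitly defer its hardest case: when $P$ is an endpoint of a nondegenerate flat portion ($\nu=\beta_{\max}$ with $\beta_{\min}<\beta_{\max}$), you state that establishing $\dim\mathcal H_P>1$ "is the crux" and stop. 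A proof cannot leave its crux as a promissory note, so there is a genuine gap exactly there.

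Worse, the gap cannot be closed by the local spectral analysis you set up. Your own computation gives $\mathcal H_P=\ker(B-\beta_{\max}I_E)$ at such an endpoint, so the deferred claim amounts to asserting that the extreme eigenvalue of the compression $B$ can never be simple when $\dim E\ge 2$; nothing in your argument forces this, and simple direct examples (e.g.\ $H_1=\diag(1,1,0)$ with $B=\diag(0,1)$ on $E$ for a suitable unitarily irreducible $3\times 3$ matrix) show that a purely local analysis at $P$ yields a one-dimensional kernel there. The paper sidesteps this entirely by invoking \cite[Proposition 2.2(c)]{WangWu13}, which (using unitary irreducibility) asserts that $\mathcal H_P=\mathcal H_Q$ for any two points $P,Q$ on the same seed; this lets it replace an endpoint by a convenient point of the seed where the eigenspace argument applies, and then transfer the conclusion back. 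That global constancy of $\mathcal H$ along a seed is precisely the ingredient your outline is missing, and you would need to prove or import it (rather than refine the dictionary between $\Gamma_F$ and $\Gamma_F^\wedge$, which by itself will not produce it) before the four cases can be assembled as you claim.
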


\begin{proof}
Without loss of generality we assume that $P$ is on a vertical support line of $W(A)$.  Call that line $\ell$ and note that $\ell: y=\re P:=\lambda$.

Assume first that $P\in \partial W(A)$ is on a seed of $A$.  Since $A$ is unitarily irreducible, $W(A)$ has no extreme points.  Hence by \cite[Proposition 2.2(c)]{WangWu13}, $\mathcal H_P=\mathcal H_Q$ for any $Q$ on the same seed as $P$.  We may thus assume without loss of generality that $P$ is on $\Gamma_F^\wedge$.  Then $\ell$ is the tangent line to $\Gamma_F^\wedge$ at $P$, and by the definition of seed we have that $\Gamma_F$ has order at least 2 at the point $\hat \ell=(1:0:-\lambda)$.  Then by Lemma \ref{lem:mult}, we have that $\lambda$ is an eigenvalue of the Hermitian part $H_1$ of $A$ with multiplicity at least 2.  Thus $\langle H_1 \mathbf x, \mathbf x\rangle=\lambda$ has two linearly independent solutions $\mathbf x_1$ and $\mathbf x_2$.  These are also solutions to $\re\langle A \mathbf x, \mathbf x\rangle=\lambda$.  Thus we see that $\langle A \mathbf x_1, \mathbf x_1\rangle$ and $\langle A \mathbf x_2, \mathbf x_2\rangle$ are both on $\ell$.  We invoke \cite[Proposition 2.2(c)]{WangWu13} again to see that $\mathbf x_1, \mathbf x_2\in \mathcal H_P$.

Next, assume that $\dim \mathcal H_P>1$.
Let $\mathbf x_1$ and $\mathbf x_2$ be linearly independent vectors
in $\mathcal H_P$.

By \cite[Proposition 2.2(a)]{WangWu13} we have for $j=1,2$ that $(H_1-\lambda I)\mathbf x_j=0$.  Hence $\lambda$ is an eigenvalue of $H_1$ with multiplicity at least 2.  By Lemma \ref{lem:mult}, we have that $\Gamma_F$ has order at least 2 at the point $(1:0:-\lambda)$.  This point is exactly $\hat \ell$.  Thus $\ell$ intersects $\Gamma_F^\wedge$ in at least two  points, counting multiplicity.  These two points either form a flat portion of $\partial W(A)$ or a singular point of $\Gamma_F^\wedge$ on $\partial W(A)$, which in either case is a seed.  Hence $P$ is on a seed.
\end{proof}

Using the concept of seeds, we can now give an interpretation of the necessary condition for $k(A)$ to be minimal.  By Proposition \ref{prop:WangWuconnect}, our next proposition is equivalent to Corollary 2.6 in \cite{WangWu13}. We provide an alternative proof of this result using our notation.

\begin{prop}\label{prop:boundary2}
    Let $A\in M_n(\C)$ with boundary generating curve $\Gamma^\wedge_{F}$.  If $k(A)=2$, then $A$ has no \spoints.  That is, $\partial W(A)$ has no flat portions and no singularity of $\Gamma^\wedge_{F}$ is on $\partial W(A)$.
\end{prop}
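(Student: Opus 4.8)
The plan is to argue by contradiction. I will assume $k(A)=2$ together with the existence of a seed and derive a contradiction, after first setting aside the genuinely degenerate case in which $A$ is a $2\times2$ normal matrix (there $W(A)$ collapses to a segment, which is exactly the case already excluded in Lemma~\ref{lem:k=2a}). Suppose then that $A$ is not $2\times2$ normal, that $k(A)=2$, and, for contradiction, that $A$ has a seed lying on a support line of $W(A)$. Since $k(A)$ is unchanged under the rotations $A\mapsto e^{-i\phi}A$, and such a rotation rotates $W(A)$ and hence carries seeds to seeds by Lemma~\ref{lem:angles}, I may replace $A$ by a suitable rotate so that this support line is the vertical right support line $\ell: x=\lambda_{\max}$, where $\lambda_{\max}$ is the largest eigenvalue of $H_1=\re A$.

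The heart of the argument is to show that a seed on $\ell$ forces $\lambda_{\max}$ to have multiplicity at least $2$ as an eigenvalue of $H_1$. Let $E=\ker(H_1-\lambda_{\max}I)$. A unit vector $v$ satisfies $\re\langle Av,v\rangle=\langle H_1 v,v\rangle=\lambda_{\max}$ precisely when $v\in E$, and then $\langle Av,v\rangle=\lambda_{\max}+i\langle H_2 v,v\rangle$; hence the part of $W(A)$ lying on $\ell$ is exactly $\lambda_{\max}+i\,W(H_2|_E)$, a (possibly degenerate) vertical segment. If the seed is a \emph{flat portion}, this segment is nondegenerate, so the Hermitian compression $H_2|_E$ has numerical range a nondegenerate interval and therefore $\dim E\ge2$. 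If the seed is instead a \emph{singular point} of $\Gamma^\wedge_F$, then by the observation recorded just after Definition~\ref{defn:seed} the dual point $\hat\ell=(1:0:-\lambda_{\max})$ is a singularity of the base curve $\Gamma_F$, so $\Gamma_F$ has order at least $2$ there, and Lemma~\ref{lem:mult} yields that $\lambda_{\max}$ has multiplicity at least $2$. In either case the multiplicity is at least $2$.

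With this in hand the contradiction is immediate: $\lambda_{\max}$ is the maximum eigenvalue of $\re(e^{-i\cdot 0}A)=H_1$ and has multiplicity at least $2$, while $A$ is not a $2\times2$ normal matrix. This contradicts Lemma~\ref{lem:k=2a}, which asserts that when $k(A)=2$ every such maximum eigenvalue is simple. Hence no seed can exist, which is precisely the statement that $\partial W(A)$ has no flat portions and that no singularity of $\Gamma^\wedge_F$ lies on $\partial W(A)$.

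I expect the main obstacle to be the singular-point case of the middle step, namely converting ``$P$ is a singular point of the boundary generating curve $\Gamma^\wedge_F$ lying on $\partial W(A)$'' into ``$\lambda_{\max}$ has multiplicity at least $2$.'' The flat-portion case is an elementary computation valid for every $A$, but the singular-point case leans on the duality assertion that the support line at the seed dualizes to a singularity of $\Gamma_F$, together with Lemma~\ref{lem:mult}; since that lemma is phrased for unitarily irreducible $A$, one must either verify that its eigenvalue--order correspondence persists without irreducibility or dispose of unitarily reducible $A$ separately. The remaining delicate point is bookkeeping: the matrices whose numerical range degenerates to a segment (the $2\times2$ normal case) are true exceptions and must be excluded exactly as in Lemma~\ref{lem:k=2a}.
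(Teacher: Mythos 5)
Your proof is correct and follows essentially the same route as the paper's: rotate so that the seed's support line is vertical, convert the seed into a multiple maximum eigenvalue of $\re A$ via Lemma~\ref{lem:mult} (and the duality remark after Definition~\ref{defn:seed}), and contradict Lemma~\ref{lem:k=2a}. Your additional care---the direct compression argument $\lambda_{\max}+i\,W(H_2|_E)$ for the flat-portion case, the explicit exclusion of the $2\times 2$ normal matrices (for which the statement genuinely fails, since their numerical range is a segment yet $k(A)=2$), and the observation that Lemma~\ref{lem:mult} is stated only for unitarily irreducible matrices---flags real gaps that the paper's own proof passes over silently.
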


\begin{proof}
    Given a point $P\in \Gamma^\wedge_{F}$ on a \spoint, without loss of generality we may assume (via rotation) that $P$ occurs on a vertical tangent line to $\Gamma^\wedge_F$, which we denote by $\ell: x=\lambda$.  Thus $\hat P$ is a tangent line to $\Gamma_F$ at the point $\hat \ell$.  Let $m=\textmd{ord}_{\hat P}(\Gamma_F)$.  By Lemma \ref{lem:mult}, $m$ is also the algebraic multiplicity of $\lambda$ as an eigenvalue of $H_1$.  Since $H_1=\re A$ and we are assuming that $k(A)=2$, Lemma \ref{lem:k=2a} implies that $m=1$.  Thus $\hat \ell$ is a nonsingular point on $\Gamma_F$.  Therefore the intersection multiplicity of $\ell$ and $\Gamma^\wedge_F$ at $P$ is 1, so that, in particular, $P$ is a nonsingular point. This contradicts our premise that $P$ is on a \spoint.
\end{proof}

By Proposition~\ref{prop:boundary2}, any matrix $A$ with a \spoint{} has $k(A) \geq 3$.  While the converse holds for $3\times 3$ matrices \cite[Corollary 2.12]{WangWu13}, it does not hold in general, as shown in Example \ref{ex:no-sing}
below for $4\times 4$ matrices, and by Proposition \ref{prop:toes} for $n\times n$ matrices with $n\geq 5$.
However, we do know that for an $n\times n$ matrix $A$ with $n\geq 3$, $k(A)$ must be less than $n$ in the case that $A$ has no seeds \cite[Proposition 2.10]{WangWu13}.

  In addition, complex singularities do not necessarily increase $k(A)$, as the next example gives a $4\times 4$ matrix $A$ for which $\Gamma_F$ has two complex singularities, but no real singularities and $k(A)=2$.

\begin{ex}\label{ex:complexcusps}

 Let $A=\left[
\begin{array}{cccc}
 0 & 1 & 0 & 0 \\
 0 & 0 & \frac{1}{2} & 0 \\
 0 & 0 & 0 & 2 \\
 0 & 0 & 0 & 0 \\
\end{array}
\right]$.  By \cite[Theorem 3.2]{WangWu13}, this matrix is unitarily irreducible, while $k(A)=2$ by \cite[Theorem 3.10]{WangWu13}.  In Figure \ref{fig:complexcusps} we show the graph of $F(-ix,1,y)=0$ (i.e., the first coordinate is pure imaginary and we include the line at infinity), together with the graphs for $\Gamma_F$ and $\Gamma^\wedge_{F}$. We can see that the \associated curve $\Gamma_F$ has a pair of complex conjugate singularities at infinity, namely $(\pm i : 1 : 0)$, which do not give rise to any singularities on the (real) boundary generating curve.

\begin{figure}[h!]
\centering
\includegraphics[height = 1.7in]{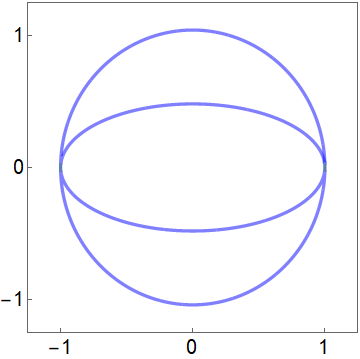}
\hspace{.5in}
\includegraphics[height = 1.7in]{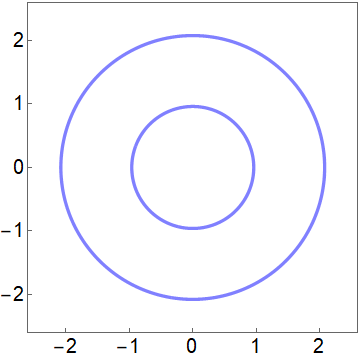}
\hspace{.5in}
\includegraphics[height = 1.7in]{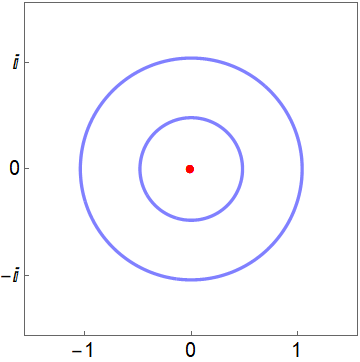}

\caption{The leftmost graph is a complex slice of $\Gamma_F$ for the matrix $A$ of Example~\ref{ex:complexcusps}. Observe that the points $(\pm i:1:0)$, which can be viewed on the left and right sides of this graph, are singularities. Despite the presence of these singularities, $k(A)=2$.  The middle and right graphs illustrate $\Gamma_F$ and $\Gamma^\wedge_{F}$ for $A$. The marked point is $0$, the unique eigenvalue of $A$.}

\label{fig:complexcusps}
\end{figure}

\end{ex}

Now we use the notion of singularities of $\Gamma_F$ to characterize those $n\times n$ matrices $A$ with maximal $k(A)$. Recall that in our notation we have $\operatorname{ord}_P\left(\Gamma_F\right)=1$ when $P$ is nonsingular.

\begin{prop}\label{lem:collinear}
    Let $n\geq 2$ and $A\in M_n(\mathbb C)$ be a unitarily irreducible matrix with \associated curve $\Gamma_F$. Then
$k(A)=n$ if and only if $\Gamma_F$ has two distinct real points $P$ and $Q$ that are collinear with the origin and satisfy $\operatorname{ord}_P\left(\Gamma_F\right)+\operatorname{ord}_Q\left(\Gamma_F\right)=n$.
\end{prop}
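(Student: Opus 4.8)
The plan is to first translate the geometric hypothesis into a spectral one. By Lemma~\ref{lem:angles}, two distinct real points $P,Q$ of $\Gamma_F$ collinear with the origin $(0:0:1)$ lie on a common line $x\sin\phi+y\cos\phi=0$, which becomes the line $y=0$ for the rotated matrix $e^{i\phi}A$. Since $F(0:0:1)=\det I_n=1$, the origin is not on $\Gamma_F$, so $P$ and $Q$ take the form $(1:0:-\lambda_P)$ and $(1:0:-\lambda_Q)$; by Lemma~\ref{lem:mult} these correspond to two distinct eigenvalues $\lambda_P\neq\lambda_Q$ of $H_\phi:=\re(e^{i\phi}A)$ whose multiplicities are $\operatorname{ord}_P(\Gamma_F)$ and $\operatorname{ord}_Q(\Gamma_F)$. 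Because the eigenvalue multiplicities of the Hermitian matrix $H_\phi$ sum to $n$, the requirement $\operatorname{ord}_P(\Gamma_F)+\operatorname{ord}_Q(\Gamma_F)=n$ holds if and only if $H_\phi$ has \emph{exactly} two distinct eigenvalues. Thus the statement reduces to showing that $k(A)=n$ if and only if $\re(e^{i\phi}A)$ has exactly two distinct eigenvalues for some angle $\phi$.

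For the ``if'' direction, suppose $H_\phi$ has exactly two eigenvalues $\mu_{\min}<\mu_{\max}$ with multiplicities $m_1,m_2$, $m_1+m_2=n$. I would choose orthonormal eigenbases of the two (mutually orthogonal) eigenspaces to obtain an orthonormal basis $\{x_1,\dots,x_n\}$ of $\C^n$; for each $x_j$ we have $\re(e^{i\phi}\scal{Ax_j,x_j})=\langle H_\phi x_j,x_j\rangle\in\{\mu_{\min},\mu_{\max}\}$, which is the minimal or maximal real part attained on $W(e^{i\phi}A)$. Hence each $\scal{Ax_j,x_j}$ lies on a vertical support line of $W(e^{i\phi}A)$, so $\scal{Ax_j,x_j}\in\partial W(A)$ and $k(A)=n$.

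For the ``only if'' direction, assume $k(A)=n$ with orthonormal $\{x_1,\dots,x_n\}$ satisfying $p_j:=\scal{Ax_j,x_j}\in\partial W(A)$. For each $j$ I would pick a support line of $W(A)$ through $p_j$ with outer normal angle $\phi_j$, so that $\langle H_{\phi_j}x_j,x_j\rangle=\re(e^{i\phi_j}p_j)$ equals the largest eigenvalue of $H_{\phi_j}$; by the Rayleigh characterization of $\mu_{\max}$, this forces $x_j$ to be a maximal eigenvector of $H_{\phi_j}$. Passing to the basis $\{x_j\}$ we may take $x_j=e_j$ and $a_{jj}=p_j$; then $e_j$ being an eigenvector of $H_{\phi_j}$ says the off-diagonal entries of $H_{\phi_j}$ in row $j$ vanish, i.e. $e^{i\phi_j}a_{ij}+e^{-i\phi_j}\overline{a_{ji}}=0$ for all $i\neq j$. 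Whenever $a_{ij}\neq 0$ this gives $e^{2i\phi_j}=-\overline{a_{ji}}/a_{ij}$ and, symmetrically, $e^{2i\phi_i}=-\overline{a_{ij}}/a_{ji}$, from which $\phi_i\equiv\phi_j\pmod{\pi}$. Since unitary irreducibility makes the graph on $\{1,\dots,n\}$ with an edge joining $i,j$ whenever $a_{ij}\neq 0$ connected (a disconnected graph would give a coordinate reducing subspace), all the $\phi_j$ are congruent modulo $\pi$, hence equal to a single $\phi$ or to $\phi+\pi$. Each $x_j$ is therefore a maximal eigenvector of $H_\phi$ or, since $H_{\phi+\pi}=-H_\phi$, a minimal one; as $\{x_j\}$ is then an orthonormal eigenbasis of $H_\phi$ using at most the two eigenvalues $\mu_{\max}(H_\phi)$ and $\mu_{\min}(H_\phi)$, and $H_\phi$ cannot be scalar (that would make $A$ normal, impossible for a unitarily irreducible matrix with $n\geq 2$), $H_\phi$ has exactly two distinct eigenvalues.

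The main obstacle is this ``only if'' direction: a priori the normal directions $\phi_j$ could all be different, and the task is to collapse them to a single angle modulo $\pi$. The decisive step is the vanishing of the off-diagonal entries of $H_{\phi_j}$, which pins $e^{2i\phi_j}$ to the entry ratios $-\overline{a_{ji}}/a_{ij}$ and yields the pairwise congruence $\phi_i\equiv\phi_j\pmod{\pi}$ along every nonzero entry; unitary irreducibility then supplies exactly the connectivity needed to propagate this congruence across all indices. The remaining ingredients — the spectral reformulation via Lemmas~\ref{lem:angles} and~\ref{lem:mult}, and the eigenbasis construction in the ``if'' direction — are comparatively routine.
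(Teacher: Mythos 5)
Your proof is correct, and its overall architecture (reduce via Lemmas~\ref{lem:angles} and~\ref{lem:mult} to the statement that some $\re(e^{i\phi}A)$ has exactly two distinct eigenvalues, then build the orthonormal set from the two eigenspaces) matches the paper's. The ``if'' direction is essentially identical to the paper's. The ``only if'' direction, however, is where you genuinely diverge: the paper outsources the hard step to \cite[Theorem 7]{CaRaSeS}, which already guarantees a \emph{single} angle $\phi$ such that all $n$ orthonormal vectors realize the extreme eigenvalues of $\re(e^{-i\phi}A)$, and then invokes \cite[Proposition 3]{Nutshell} to conclude each vector is an extreme eigenvector. You instead prove the single-angle claim from scratch: each $x_j$ is a maximal eigenvector of $H_{\phi_j}$ for its own support-line angle, the vanishing of the off-diagonal entries of $H_{\phi_j}$ in the basis $\{x_j\}$ pins $e^{2i\phi_j}$ to $-\overline{a_{ji}}/a_{ij}$, whence $e^{2i(\phi_i-\phi_j)}=1$ along every nonzero entry, and unitary irreducibility supplies the connectivity of the nonzero-entry graph needed to propagate $\phi_i\equiv\phi_j\pmod{\pi}$ to all indices. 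Your route is longer but self-contained, and it makes explicit exactly where unitary irreducibility enters (both in the connectivity step and, as in the paper, in ruling out the scalar case); the paper's version is shorter but leans on the external theorem. I checked the computational steps (the entrywise condition $e^{i\phi_j}a_{ij}+e^{-i\phi_j}\overline{a_{ji}}=0$, the implication $a_{ij}=0\Leftrightarrow a_{ji}=0$ making the graph well defined, and the identity $H_{\phi+\pi}=-H_\phi$ converting maximal into minimal eigenvectors) and they all hold, so there is no gap.
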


\begin{proof}
   {First assume that $P$ and $Q$ are two points of $\Gamma_F$ lying on a line $\ell$ through the origin, and let $m=\operatorname{ord}_P\left(\Gamma_F\right)$ and $n-m=\operatorname{ord}_Q\left(\Gamma_F\right)$.  Using Lemma \ref{lem:angles}, we assume without loss of generality that $\ell$ is the line $y=0$.  By Lemma \ref{lem:mult}, the points $P$ and $Q$ correspond to distinct eigenvalues of $H_1$, say $\lambda_1$ of multiplicity $m$ and $\lambda_2$ of multiplicity $n-m$.}

  Hence $\lambda_1$ and $\lambda_2$ are the only eigenvalues of $H_1$, and so they are in particular the maximum and minimum eigenvalues of $H_1$. Therefore, they lie on the left and right vertical support lines of $W(A)$.  Since Hermitian matrices are unitarily diagonalizable, we have a set of $n$ orthonormal eigenvectors $\left\{\textbf v_j\right\}_{j=1}^n$ for $H_1$.
  Then, for each $j\in \{1,\dots,n\}$, there exists $k\in \{1,2\}$ such that
  \begin{equation*}
  \langle A\textbf v_j, \textbf v_j \rangle = \langle (H_1 + iH_2)\textbf v_j, \textbf v_j\rangle
                                            = \langle H_1 \textbf v_j, \textbf v_j\rangle + i\langle H_2 \textbf v_j, \textbf v_j\rangle
                                            = \langle \lambda_k \textbf v_j, \textbf v_j\rangle + i\langle H_2 \textbf v_j, \textbf v_j\rangle
                                            = \lambda_k + i\langle H_2 \textbf v_j, \textbf v_j\rangle.
  \end{equation*}
  Since $H_2$ is Hermitian, $i\langle H_2 \textbf v_j, \textbf v_j\rangle$ is purely imaginary, so each $\re\langle A\textbf v_j, \textbf v_j \rangle$ for $j=1,\dots,n$ is equal to either $\lambda_1$ or $\lambda_2$. Thus each $\langle A\textbf v_j, \textbf v_j \rangle$ is on the boundary of $W(A)$, so we conclude that $k(A)=n$.

  Now assume $k(A)=n$.  By  \cite[Theorem 7]{CaRaSeS}, there exists $\phi$ such that there is a set of $n$ orthonormal vectors $\left\{\textbf v_j\right\}_{j=1}^n$ for which $\langle \re(e^{-i\phi}A)\textbf v_j,\textbf v_j\rangle \in\left\{\lambda_{\operatorname{min}}, \lambda_{\operatorname{max}}\right\}$, where $\lambda_{\operatorname{min}}$ and $\lambda_{\operatorname{max}}$ are the smallest and largest eigenvalues of $\re(e^{-i\phi}A)$, respectively. Without loss of generality, we may assume $\phi=0$.
  Then, for $j=1,\dots,n$, we have
  $\operatorname{Re}\langle A\textbf v_j,\textbf v_j\rangle = \langle H_1\textbf v_j, \textbf v_j\rangle \in \{\lambda_{\operatorname{min}},\lambda_{\operatorname{max}}\}$, and it follows by \cite[Proposition 3]{Nutshell} that $\textbf v_j$ is an eigenvector associated with $\lambda_{\operatorname{min}}$ or $\lambda_{\operatorname{max}}$.
  Thus we may conclude that the span of the eigenspaces of $\lambda_{\operatorname{min}}$ and $\lambda_{\operatorname{max}}$ is all of $\mathbb{C}^n$.  In particular, $\re A$ has at most two distinct eigenvalues. However, it cannot have only one eigenvalue as then $W(A)$ would be a vertical line segment and $A$ would be normal and thus unitarily reducible. Hence $\re A$ has two distinct eigenvalues, of multiplicities $m$ and $n-m$.  By Lemma \ref{lem:mult}, these correspond to distinct singular points of $\Gamma_F$ of order $m$ and $n-m$, respectively, on the line $y=0$.
  \end{proof}

\section{Using singularities of $\Gamma_F$ to compute $k(A)$ for $4\times 4$ matrices}\label{s:sing}

Applying Proposition \ref{lem:collinear} to Chien and Nakazato's classification, we immediately get the following result for $4\times 4$ matrices.
\begin{thm}\label{thm:maximal}
    Let $A\in M_4(\C)$.   Then $k(A)=4$ if and only if one of the following cases holds:

\begin{enumerate}
\item There is a point $P$ for which $\operatorname{ord}_P(\Gamma_F)=3$, or
\item $\Gamma_F$ has real singularities, two of which are collinear with the origin.
\end{enumerate}
\end{thm}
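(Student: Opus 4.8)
The plan is to derive Theorem~\ref{thm:maximal} directly from Proposition~\ref{lem:collinear} applied to the case $n=4$, using Chien and Nakazato's classification only to translate the abstract condition into the two concrete cases. Proposition~\ref{lem:collinear} already tells us that $k(A)=4$ if and only if $\Gamma_F$ has two distinct real points $P$ and $Q$ that are collinear with the origin and satisfy $\operatorname{ord}_P(\Gamma_F)+\operatorname{ord}_Q(\Gamma_F)=4$. So the entire task reduces to enumerating how two orders can sum to $4$ on a degree-$4$ curve, recalling that $\operatorname{ord}=1$ means nonsingular and $\operatorname{ord}\geq 2$ means singular.

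First I would split into the possible order pairs $\{\operatorname{ord}_P,\operatorname{ord}_Q\}$ summing to $4$ with both points on a line through the origin. The admissible partitions of $4$ into two positive parts are $1+3$, $2+2$, and (degenerately) $3+1$, which is the same as $1+3$. For a degree-$4$ curve, a point of order $4$ would force the curve to be four concurrent lines through a single point, which cannot happen for a unitarily irreducible matrix (its base curve is not a product of concurrent lines through one point), so $\operatorname{ord}\le 3$ at any point and the case $4+0$ is vacuous. The pair $\{1,3\}$ corresponds to one point of order $3$; since a point of order $3$ on a quartic is automatically a singularity, and Chien and Nakazato's list shows that such a triple point exists, this yields exactly Case~(1): there is a point $P$ with $\operatorname{ord}_P(\Gamma_F)=3$. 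Here I would note that, for a quartic, a single order-$3$ point together with the residual order-$1$ intersection $Q$ on the line joining it to the origin is automatically available by B\'ezout, so the existence of the triple point $P$ alone suffices and we need not separately demand the companion point $Q$.

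Next I would handle the pair $\{2,2\}$: two distinct singular points $P$ and $Q$, each of order exactly $2$, collinear with the origin. This is precisely Case~(2): $\Gamma_F$ has real singularities, two of which are collinear with the origin. I would be careful to observe that an order-$2$ point is exactly a singular point of a quartic (order cannot exceed the degree and order $\ge 2$ means singular), so ``two singularities collinear with the origin'' and ``two order-$2$ points summing to $4$'' say the same thing, modulo the possibility that one of them is actually a higher-order point---but on a quartic order $3$ already falls under Case~(1), so the cases overlap cleanly rather than conflict. I would then verify the converse direction: each of the two enumerated cases plainly produces two real points collinear with the origin whose orders sum to $4$ (in Case~(1), the triple point and the residual simple intersection; in Case~(2), the two double points), so Proposition~\ref{lem:collinear} gives $k(A)=4$.

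The main obstacle I anticipate is the bookkeeping around reality and collinearity rather than any deep new idea. Specifically, I must ensure that the points furnished by Chien and Nakazato's classification are genuinely \emph{real} affine points (not complex, as Example~\ref{ex:complexcusps} warns) and that in Case~(1) the residual fourth intersection point $Q$ is itself real and distinct from $P$, so that Proposition~\ref{lem:collinear} applies verbatim. The reality of $Q$ follows because $F_A$ has real coefficients by Lemma~\ref{lem:real}: a real line through a real triple point meets the real quartic in a fourth point whose coordinates are forced to be real, since the product of the four roots (three of which are the real triple point) is real. Establishing that the origin-to-$P$ line meets $\Gamma_F$ in a genuine fourth real point, and that this does not coincide with $P$, is the one place requiring a short explicit argument; everything else is a direct invocation of Proposition~\ref{lem:collinear} combined with the finite case-list of possible singularity structures for irreducible quartic base curves.
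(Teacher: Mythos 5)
Your proposal takes essentially the same route as the paper: the paper obtains Theorem~\ref{thm:maximal} by applying Proposition~\ref{lem:collinear} with $n=4$ and reading off the only two admissible partitions of the order sum, $3+1$ (one triple point, giving Case~(1)) and $2+2$ (two order-$2$ singularities collinear with the origin, giving Case~(2)), exactly as you do. Your extra bookkeeping --- ruling out an order-$4$ point, and checking via Lemma~\ref{lem:real} (or, more directly, via the realness of the eigenvalues of $H_1$ in Lemma~\ref{lem:mult}) that the residual point $Q$ in Case~(1) is real and distinct from $P$ --- is correct and simply makes explicit what the paper leaves implicit.
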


Observe that in the first case of Theorem \ref{thm:maximal}, the order of $\Gamma_F$ at the points $P$ and $Q$ (in the notation of Proposition \ref{lem:collinear}) will be $3$ and  $1$, so that only one of these two points is a singularity.

We can use this theorem to show that several of Chien and Nakazato's examples in \cite{ChiNa12} have $k(A)=4$. Their Example 4.1 gives an explicit matrix which falls into the first case of Theorem \ref{thm:maximal}, with a triple crossing singularity.  Examples 4.2 and 4.7 similarly provide matrices that fit the conditions of the second case of Theorem~\ref{thm:maximal}. Example 4.7 includes two parallel flat portions, whereas Example 4.2 has both types of seeds, with both on the $x$-axis.
This demonstrates the importance of using $\Gamma^\wedge_{F}$ to study $k(A)$, since one of the seeds in Example 4.2 is not a flat portion in $\partial W(A)$.

We now consider options for singularities that lead to $k(A)=3$.
In the following example, there are two singularities, but $k(A) = 3$.

\begin{ex}\label{ex:non-singk(A)=3}
Consider the matrix $$A= \left[\begin{array}{cccc}
   0 & 4i/3 & i/2 & 8i/3 \\
   0 & 0 & 8i/3 & 0 \\
   0 & 0 & 0 & 4i/3 \\
   0 & 0 & 0 & 0\\
\end{array}\right].$$
The \associated polynomial is
\[
F(x:y:t) =  t^4-\frac{649 t^2 x^2}{144}+\frac{400 x^4}{81}+\frac{8}{9} t x^2 y-\frac{649 t^2 y^2}{144}+\frac{544 x^2 y^2}{81}+\frac{8 t y^3}{9}+\frac{16 y^4}{9},
\]
which has a zero set with two singularities, both order 2 crossing singularities. We plot the graph of $F(x:y:1)=0$ in Figure \ref{fig:7ba}.  Each singularity corresponds to a flat portion of $W(A)$.  These two flat portions imply that the matrix $A$ has two \spoints{} and there are two angles $\phi$ for which $\re(e^{-i\phi}A)$ has a double maximum eigenvalue.
By Proposition \ref{prop:boundary2} or Lemma \ref{lem:k=2a}, we have $k(A)>2$.  However, the angles corresponding to these flat portions are not equal modulo $\pi$.  This means that the singularities are not collinear with the origin and $k(A)<4$ by Theorem \ref{thm:maximal}.  Therefore, $k(A) = 3$.
\begin{figure}[h!]
\centering
\includegraphics[height = 1.5in]{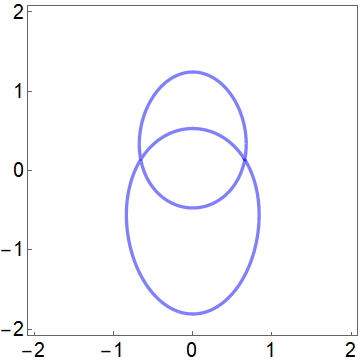}\hspace{.5in} \includegraphics[height = 1.5in]{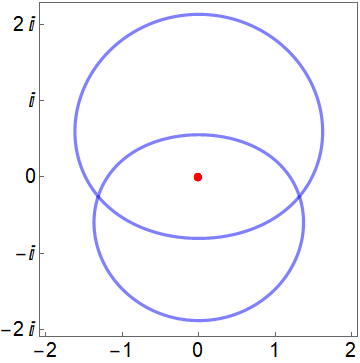}
\caption{Two real order 2 crossing singularities of $\Gamma_F$, non-collinear with the origin; boundary generating curve $\Gamma^\wedge_F$ with a quadruple eigenvalue of $A$ marked.  The matrix $A$ has $k(A) = 3$.}\label{fig:7ba}
\end{figure}
\end{ex}

The same argument can be used to show $k(A)=3$ whenever the base curve of a $4\times 4$ matrix $A$ has one or more singularities of order 2 corresponding to a \spoint{} of $A$, no pair of which are collinear with the origin.  Example 4.4 in \cite{ChiNa12} is a $4\times 4$ matrix where $\Gamma_F$ has three distinct singularities of order 2, no pair of which is collinear with the origin.  Therefore, Theorem \ref{thm:maximal} and Proposition \ref{prop:boundary2} show that $k(A)=3$.  Examples 4.3, 4.5, 4.6, and 4.9 of \cite{ChiNa12} give examples of matrices whose boundary generating curves have one singularity of order 2, which corresponds to a \spoint{} for $A$. The seed implies that $k(A) >2$ by Proposition \ref{prop:boundary2}, and since the singularity is not order 3, Theorem \ref{thm:maximal} shows that $k(A)<4$. Therefore, we can conclude that $k(A) = 3$.  The same reasoning yields the following theorem.

\begin{thm}\label{thm:k(A)=3}
    Let $A$ be a $4\times 4$ matrix.  If every real singularity of $\Gamma_F$ corresponding to a seed of $A$ is of order 2, and no two of these singularities are collinear with the origin, then $k(A)=3$.
\end{thm}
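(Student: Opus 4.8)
The plan is to trap $k(A)$ from both sides, showing $k(A)\ge 3$ and $k(A)<4$; since $2\le k(A)\le 4$ for any $4\times 4$ matrix, this forces $k(A)=3$. The lower bound will come from Proposition \ref{prop:boundary2}, and the upper bound from Theorem \ref{thm:maximal}. The real work is translating the seed-based hypothesis into the purely singularity-based language of Theorem \ref{thm:maximal}.

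For the lower bound, I first note that the hypothesis is understood to require that $A$ have at least one seed: otherwise the premise is vacuous and the conclusion can fail, since a matrix with no seeds may well satisfy $k(A)=2$. Granting a seed, the contrapositive of Proposition \ref{prop:boundary2} gives $k(A)\ne 2$, and hence $k(A)\ge 3$.

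For the upper bound I would assume $k(A)=4$ and derive a contradiction using Theorem \ref{thm:maximal}. In its first case there is a point $P$ with $\operatorname{ord}_P(\Gamma_F)=3$; rotating by Lemma \ref{lem:angles} so that $P$ lies on $y=0$, Lemma \ref{lem:mult} identifies this with a multiplicity-$3$ eigenvalue $\lambda$ of $\re(e^{-i\phi}A)$. In a $4\times 4$ Hermitian matrix such a $\lambda$ is necessarily the minimum or maximum eigenvalue, so $\dim \mathcal H_P=3>1$ and Proposition \ref{prop:WangWuconnect} places $P$ on a seed; this is a seed whose associated singularity has order $3$, contradicting the hypothesis. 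In the second case $\Gamma_F$ has two real singularities collinear with the origin, and by Proposition \ref{lem:collinear} their orders sum to $4$; as each is a singularity of order at least $2$, they must both have order exactly $2$. Rotating the connecting line to $y=0$, Lemma \ref{lem:mult} matches them with the two (and only two) eigenvalues of $\re(e^{-i\phi}A)$, each of multiplicity $2$, so both are extreme and, by Proposition \ref{prop:WangWuconnect}, both lie on seeds. These two seed-singularities are collinear with the origin, contradicting the hypothesis. Thus $k(A)=4$ is impossible and $k(A)=3$.

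The step I expect to be the main obstacle is the bridge used twice above: that every singularity responsible for $k(A)=4$ in fact corresponds to a seed. This is exactly where the $4\times 4$ hypothesis is essential, as it forces a multiplicity-$3$ eigenvalue, or a matched pair of multiplicity-$2$ eigenvalues, to be extreme and therefore exposed on $\partial W(A)$, after which Proposition \ref{prop:WangWuconnect} converts ``exposed and of multiplicity greater than one'' into ``on a seed.'' A related subtlety is that Proposition \ref{prop:WangWuconnect} and Lemma \ref{lem:mult} are stated for unitarily irreducible $A$, so I would either restrict to that setting or dispatch the unitarily reducible $4\times 4$ case via Lee's classification; in the irreducible case the multiple extreme eigenvalue forces the restriction of $\im(e^{-i\phi}A)$ to the corresponding eigenspace to have a nondegenerate numerical range, which produces a genuine flat portion and hence a seed.
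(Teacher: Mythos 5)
Your proof is correct and follows essentially the same route as the paper's (one-paragraph) justification: the presence of a seed forces $k(A)\ge 3$ via Proposition \ref{prop:boundary2}, and Theorem \ref{thm:maximal} rules out $k(A)=4$. You are in fact somewhat more careful than the paper, since you explicitly verify (via Proposition \ref{lem:collinear}, Lemma \ref{lem:mult}, and Proposition \ref{prop:WangWuconnect}) that any singularity configuration realizing $k(A)=4$ in a $4\times 4$ matrix must consist of singularities that correspond to seeds---which is needed because the hypothesis only constrains seed-singularities---and you correctly flag that the statement implicitly assumes $A$ has at least one seed and is unitarily irreducible.
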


Proposition \ref{prop:boundary2} establishes that singularities of $\Gamma_F$ can sometimes be used to show that $k(A)>2$.  The following example shows that $k(A)$ can be greater than $2$ even in the absence of singularities.

\begin{ex}\label{ex:no-sing}
Consider the matrix
\[A=
\left[\renewcommand*{\arraystretch}{1.3}
\begin{array}{cccc}
 \frac{19 i}{54} & 0 & 0 & \frac{7 i}{54} \\
 0 & \frac{8}{27} & 0 & \frac{5}{54} \\
 0 & 0 & \frac{1}{2}+\frac{i}{2} & \frac{1}{18}-\frac{i}{18} \\
 \frac{7 i}{54} & \frac{5}{54} & \frac{1}{18}-\frac{i}{18} & \frac{5}{27}+\frac{10 i}{27} \\
\end{array}
\right].\]
Figure \ref{fig:a2} shows that $\Gamma_F$ has no singularities, which by Theorem \ref{thm:maximal} demonstrates that $k(A)<4$.  However, the blue points are the images of the vectors $\textbf{e}_1$, $\textbf{e}_2$, $\textbf{e}_3$, and $\textbf{e}_4$, with the first three on the boundary; this can be seen by computing the maximum eigenvalues of $\textmd{Re}(e^{i\phi}A)$ with $\phi=\pi, \pi/2, -\pi/4$.  This demonstrates that $k(A) \geq 3$.  Therefore, $k(A) = 3$ even though $\Gamma_F$ has no singularities.

\begin{figure}[h]
 \includegraphics[height=2in]{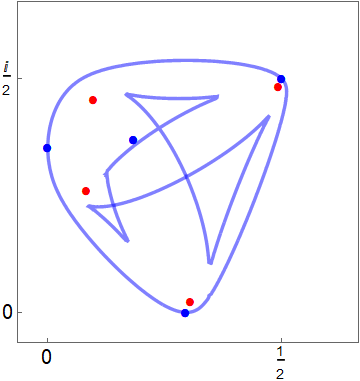}
\caption{The boundary generating curve $\Gamma_F^\wedge$ for the matrix $A$ of Example~\ref{ex:no-sing}, with $k(A) = 3$.  There are no singularities of $\Gamma_F^\wedge$ on $\partial W(A)$, but the vectors $\textbf{e}_1$, $\textbf{e}_2$ and $\textbf{e}_3$ map to the points $19i/54$, $8/27$, and $\frac{1}{2}+\frac{i}{2}$, marked by blue dots, which are on $\partial W(A)$.  The red points show the eigenvalues of $A$.}
\label{fig:a2}
\end{figure}
\end{ex}

This example shows that the classification of singularities of $\Gamma_F$ and $\Gamma^{\wedge}_F$ for a $4\times 4$ matrix $A$ is not sufficient to distinguish between matrices for which $k(A)=2$ and $k(A)=3$.

\section{Unitarily irreducible $A$ with reducible \associated polynomial}\label{s:factor}

In the preceding sections, we studied the Gau--Wu number using the algebraic tools given by the base polynomial and the boundary generating curve.  Now we will establish limitations on the power of these tools by providing an example of matrices that have the same numerical range and boundary generating curve but do not have the same Gau--Wu number.

Let $T_n(a,b,c)$ denote the $n\times n$ tridiagonal Toeplitz matrix with every entry on its diagonal given by $a$, every entry on its first superdiagonal given by $b$, and every entry on its first subdiagonal given by $c$.  We will prove the following proposition, which gives an explicit class of pairs of matrices with the same base polynomial but different Gau--Wu numbers.

\begin{prop}\label{prop:toes}
    Let $A$ denote the tridiagonal Toeplitz matrix $T_n(a,b,c)$ with $n\geq 3$ and assume that $|b|\neq |c|$, $b, c \in \mathbb{C}$.  Let $A'$ denote the matrix resulting from swapping the $a_{j,j+1}$ entry with the $a_{j+1,j}$ entry when $j$ is odd.  Then $A$ is unitarily irreducible, $A'$ is unitarily reducible, $F_A = F_{A'}$, $W(A)=W(A')$, $k(A)=\left\lceil \frac{n}{2}\right\rceil$, and $k(A')=2$.  In particular, if $n\geq 5$, $k(A) \neq k(A')$.
\end{prop}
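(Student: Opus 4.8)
The plan is to treat every pencil $xH_1+yH_2+tI$ as a tridiagonal matrix and exploit the three-term determinant recurrence. Writing $A=aI+bS+cS^{\mathsf T}$ with $S$ the nilpotent shift (ones on the superdiagonal), a short computation gives $xH_1+yH_2=\tfrac12(\zeta A+\bar\zeta A^*)$ with $\zeta=x-iy$, so the pencil for $A$ is the tridiagonal Toeplitz matrix with constant diagonal $\alpha=\tfrac12(\zeta a+\bar\zeta\bar a)+t$, superdiagonal $\beta=\tfrac12(\zeta b+\bar\zeta\bar c)$, and subdiagonal $\gamma=\tfrac12(\zeta c+\bar\zeta\bar b)$. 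Swapping the odd off-diagonal entries makes the pencil for $A'$ tridiagonal but non-Toeplitz, with superdiagonal alternating $\gamma,\beta,\gamma,\dots$ and subdiagonal alternating $\beta,\gamma,\beta,\dots$; crucially, at every position the product of opposite off-diagonal entries is again $\beta\gamma$. Since the determinant of a tridiagonal matrix with constant diagonal depends on its off-diagonals only through these products (via $D_k=\alpha D_{k-1}-p_{k-1}q_{k-1}D_{k-2}$ for the leading minors), I obtain $F_A=F_{A'}$; equality of the dual curves and Kippenhahn's theorem then give $W(A)=W(A')$.

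Next, for the (ir)reducibility claims: if a Hermitian $P$ commutes with $A$ it commutes with $A^*$, hence with $bS+cS^{\mathsf T}$ and $\bar cS+\bar bS^{\mathsf T}$; because the coefficient determinant $|b|^2-|c|^2$ is nonzero (the first essential use of $|b|\neq|c|$), $P$ commutes with $S$ and $S^{\mathsf T}$ separately, and the joint commutant of $S,S^{\mathsf T}$ is $\C I$, so $P$ is scalar and $A$ is unitarily irreducible. For $A'$ I split $\C^n$ into odd- and even-indexed coordinates, under which $A'-aI$ becomes $\left(\begin{smallmatrix}0 & cL^{\mathsf T}\\ bL & 0\end{smallmatrix}\right)$ for an explicit $\fl{n/2}\times\ceil{n/2}$ bidiagonal $0/1$ matrix $L$. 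Feeding a real singular value decomposition $L=U\Sigma V^{\mathsf T}$ through this block form, and using that the reordering and $\mathrm{diag}(V,U)$ are unitary, exhibits $A'$ as unitarily equivalent to $\bigoplus_k\left(\begin{smallmatrix}a & c\sigma_k\\ b\sigma_k & a\end{smallmatrix}\right)$, plus a block $(a)$ when $n$ is odd; this proves $A'$ is unitarily reducible. The $\sigma_k$ are distinct (as $LL^{\mathsf T}=2I+K$, $K$ a path adjacency matrix, has simple spectrum), so these blocks have concentric, similar, strictly nested elliptical ranges, the eccentricity being kept away from a segment by $|b|\neq|c|$. Hence $W(A')$ is the outermost ellipse; only the largest block meets $\partial W(A')$, and since a point of the strictly convex $\partial W(A')$ is a convex combination of ellipse points only if every contributing block attains it, any boundary-achieving unit vector lies in that $2$-dimensional block. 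This forces $k(A')=2$.

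Then, for the value of $k(A)$, I use that the boundary points of $W(A)$ are exactly the images of the top eigenvectors of $H(\phi)=\re(e^{-i\phi}A)$ as $\phi$ varies. Each $H(\phi)$ is Hermitian tridiagonal Toeplitz with off-diagonal $\beta(\phi)=\tfrac12(e^{-i\phi}b+e^{i\phi}\bar c)$, which never vanishes because $|b|\neq|c|$; thus its spectrum is simple, and its top eigenvector is the explicit vector $v(\psi)$ with $v(\psi)_m=e^{-i(m-1)\psi}\sin\frac{m\pi}{n+1}$, depending only on $\psi=\arg\beta(\phi)$. As $\phi$ runs over the circle $\psi$ is surjective, so the boundary-achieving unit vectors are precisely the $v(\psi)$ up to phase, and $v(\psi_i)\perp v(\psi_j)$ iff $P\!\left(e^{i(\psi_i-\psi_j)}\right)=0$, where $P(z)=\sum_{m=1}^n\sin^2\frac{m\pi}{n+1}\,z^{m-1}$. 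Evaluating $P$ at roots of unity (writing $\sin^2$ through $(n+1)$-th roots and summing geometric series) shows $P(\zeta_{n+1}^{\,j})=0$ exactly for $2\le j\le n-1$, that $P(1),P(\zeta_{n+1}^{\pm1})\neq0$, and $P(-1)=0$; so the admissible ratios form $Z=\{\zeta_{n+1}^{\,j}:2\le j\le n-1\}\cup\{-1\}$, and $k(A)$ is the largest number of unit-circle points with all pairwise ratios in $Z$.

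Finally I would compute this clique number. The set of "even" roots $\zeta_{n+1}^{\,2r}$ is an independent set of the cycle $C_{n+1}$ (no two exponents differing by $0$ or $\pm1$ mod $n+1$), giving the lower bound $k(A)\ge\fl{(n+1)/2}=\ceil{n/2}$. I expect the \textbf{main obstacle} to be the matching upper bound, since a priori the angles $\psi_j$ need not be rational multiples of $\pi$; the idea that dissolves this is that the clique condition is rotation-invariant, so I may normalize one point to $1$, after which every clique point lies in $Z\cup\{1\}$, i.e.\ among the $(n+1)$-th roots of unity together with $-1$. For $n$ odd, $-1$ is already such a root and the problem is exactly the maximum independent set of $C_{n+1}$, of size $\ceil{n/2}$; for $n$ even, any clique containing $-1$ collapses into $\{1,-1\}$ (no nontrivial $(n+1)$-th root is compatible with $-1$), so the extremal clique is again the $C_{n+1}$ independent set. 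This gives $k(A)=\ceil{n/2}$, and since $k(A')=2$ we conclude $k(A)\neq k(A')$ whenever $n\ge5$.
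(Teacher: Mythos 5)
Your proposal reaches the same conclusions and, for about half of the statement, mirrors the paper's strategy: the identity $F_A=F_{A'}$ via the three-term determinant recurrence (the paper cites Prasolov for the fact that only the products of opposite off-diagonal entries matter), the irreducibility of $A$ via recovering $S$ and $S^{\mathsf T}$ from $A$ and $A^*$ using $|b|^2-|c|^2\neq 0$ (the paper runs the equivalent invariant-subspace argument with the shift $J$), and the reduction of $A'$ to $2\times 2$ blocks with nested elliptical numerical ranges forcing $k(A')=2$ (the paper cites \cite{BS04} for the block decomposition and \cite[Lemma~2.9]{WangWu13} where you argue directly with extreme points; your observation that $LL^{\mathsf T}=2I+K$ has simple spectrum is a clean replacement for the paper's Perron--Frobenius step). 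The genuine divergence is the claim $k(A)=\lceil n/2\rceil$: the paper simply cites \cite[Theorem~5]{CaRaSeS}, whereas you reprove it from scratch by parametrizing all boundary-achieving unit vectors as the top eigenvectors $v(\psi)$ of the Hermitian tridiagonal Toeplitz pencils and reducing the problem to a maximum clique computation among unit-circle points with ratios in the zero set of $P(z)=\sum_m \sin^2\tfrac{m\pi}{n+1}\,z^{m-1}$. That self-contained route is attractive and, as far as I can check, sound, including the surjectivity of $\phi\mapsto\arg\beta(\phi)$ (a winding-number consequence of $|b|\neq|c|$) and the independent-set count in $C_{n+1}$.

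One step is stated too quickly to stand as written: from the evaluations $P(\zeta_{n+1}^{\,j})=0$ for $2\le j\le n-1$ and $P(-1)=0$ you conclude that ``the admissible ratios form $Z$,'' but point evaluations only show $Z$ is \emph{contained} in the zero set of $P$; you must also rule out further zeros of $P$ on the unit circle, since any such zero would enlarge the compatibility graph and could break your upper bound. This is rescued by a degree count: $\deg P=n-1$, and $|Z|=n-1$ when $n$ is even, so there is nothing left over; when $n$ is odd, $|Z|=n-2$ and the single remaining root is forced to be $\pm 1$ by the palindromic symmetry of $P$ (the root multiset is invariant under $z\mapsto 1/z$), hence equals $-1$ since $P(1)=\tfrac{n+1}{2}>0$. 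Add that sentence and your argument for $k(A)=\lceil n/2\rceil$ is complete; everything else checks out.
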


\begin{proof}
For any tridiagonal matrix, the operation of swapping a pair of opposing off-diagonal entries preserves the numerical range; this fact is \cite[Lemma 3.1]{BS04}. We now explain how that result can be strengthened.  As noted in \cite[Section 1.6]{Prasolov}, the
influence of the off-diagonal entries on the
determinant of a tridiagonal matrix is determined
not by their individual values but only by the products of opposite off-diagonal pairs.
When $A$ is tridiagonal, so are its real and imaginary parts, and hence so is the matrix $xH_1 + yH_2+tI_n$ given in Definition \ref{defn:assoccurve}.  Therefore, the determinant of $xH_1 + yH_2+tI_n$ is invariant under the interchange of corresponding off-diagonal entries.  It follows that such an interchange, when performed on $A$, has no effect on the \associated polynomial.
Since the \associated polynomial determines the boundary generating curve, which in turn determines the numerical range, this result is stronger than the original result of \cite{BS04} mentioned above.

By shifting, we can assume without loss of generality that $a=0$, so that $A=T_n(0,b,c)$.  Note that  $A'$ is the matrix that results by swapping every entry of $A$ on the first superdiagonal lying in an odd-indexed row with its opposite subdiagonal entry.  For example, when $n=5$,
\[
A=
\begin{bmatrix}
0 & b & 0 & 0 & 0 \\
c & 0 & b & 0 & 0 \\
0 & c & 0 & b & 0 \\
0 & 0 & c & 0 & b \\
0 & 0 & 0 & c & 0
\end{bmatrix}
\quad
\text{ and }
\quad
A'=
\begin{bmatrix}
0 & c & 0 & 0 & 0 \\
b & 0 & b & 0 & 0 \\
0 & c & 0 & c & 0 \\
0 & 0 & b & 0 & b \\
0 & 0 & 0 & c & 0
\end{bmatrix}.
\]
By the above, $A$ and $A'$ have the same boundary generating polynomial, and hence $W(A)=W(A')$.  We now compare their Gau--Wu numbers.

First, note that $A$ is unitarily irreducible.
To see this, suppose some subspace $L$ is invariant under both $A$ and $A^*$.
Let $J$ be the matrix with all entries on its first superdiagonal equal to $1$, and all other entries $0$.
From the fact that $|b|\neq|c|$, it follows that $J$ and $J^*$ are linear combinations of $A$ and $A^*$. Thus $L$ is invariant under $J$ and $J^*$, and $L^\perp$ is as well.
If every vector in $L$ has its final coordinate equal to $0$, then $\textbf{e}_n \in L^\perp$, and hence $J^{n-1}\textbf{e}_n = \textbf{e}_1 \in L^\perp$.
Otherwise, there is a vector $\textbf{w}\in L$ whose final coordinate is $1$, and then $J^{n-1}\textbf{w}=\textbf{e}_1\in L$.
Thus either $L$ or $L^\perp$ contains $\textbf{e}_1$.  But then repeated application of $J^*$ shows that this subspace contains $\textbf{e}_2, \textbf{e}_3,\ldots, \textbf{e}_n$ as well.  Hence either $L$ or $L^\perp$ has dimension $n$.

On the other hand, the matrix $A'$ is unitarily reducible and is, in fact, unitarily similar to a direct sum of $\lfloor n/2 \rfloor$ blocks of size $2\times 2$, with an additional $1\times 1$ block when $n$ is odd.
This follows from the proofs of Theorem 3.3, Corollary 2.3, and then Theorem 2.1 in \cite{BS04}.
Moreover, the $1\times 1$ block, when it occurs, is necessarily zero.  Meanwhile, the $2\times 2$ blocks are given explicitly by
\begin{equation}\label{eqn:2x2_block_from_singular_value}
	A_{\sigma_j}
	=\begin{bmatrix}
		0 & \sigma_j \\
		 \beta\sigma_j & 0
	\end{bmatrix}
	=\sigma_j\begin{bmatrix}
		0 & 1 \\
		\beta & 0
	\end{bmatrix},
\end{equation}
where $\beta = c/\overline{b}$ and $\sigma_1 \ge \sigma_2 \ge \cdots \ge \sigma_{\lfloor n/2 \rfloor}$ are the nonzero singular values of the $\left\lceil \frac{n}{2}\right\rceil \times \left\lfloor \frac{n}{2}\right\rfloor$ matrix
\[
	X=
	\begin{bmatrix}
		b & 0 & ~\\
		b & b & \ddots \\
		0 & b & \ddots \\
		~ & \ddots & \ddots
	\end{bmatrix}
\]
whose $j$th column is given by $b(\textbf{e}_j +\textbf{e}_{j+1})$.
From these observations and \cite[Corollary 2.3]{BS04} it follows that the numerical range of each $A_{\sigma_j}$ is an ellipse centered at the origin.

Note that there is a unique $2\times 2$ block corresponding to the largest singular value of $X$, namely $\sigma_1$.
This holds since
 the singular values of $X$ are precisely the eigenvalues of
\[
	XX^* = |b|^2
	\begin{bmatrix}
		1 & 1 & 0 & ~ &  & ~ \\
		1 & 2 & 1 & 0 &  & ~\\
		0 & 1 & 2 & 1 & \ddots & ~ \\		
		~ & 0 &\ddots& \ddots & \ddots &0  \\
		~ & & \ddots & 1 & 2 & 1 \\		
		~& ~ &  & 0  & 1 & 1 \\		
	\end{bmatrix},
\]
and the largest eigenvalue of this matrix is simple, by the Perron-Frobenius Theorem \cite[Theorem 8.4.4]{HJ1}.
Also, the ellipse arising from the block corresponding to $\sigma_1$ contains in its interior the ellipse corresponding to each other block; this can be seen as Equation \ref{eqn:2x2_block_from_singular_value} shows that the numerical range of $W(A_{\sigma_j})$
results from dilating the numerical range of
$\displaystyle\left[\begin{smallmatrix}0&1\\\beta &0\end{smallmatrix}\right]$
by a factor of $\sigma_j$, and this numerical range is an ellipse centered at the origin.

Hence the single $2\times 2$ block corresponding to $\sigma_1$ generates an ellipse whose interior contains the ellipses arising from all other blocks, as well as the single point at the origin that gives the numerical range of the additional $1\times 1$ block when $n$ is odd.  It follows that $A'$ is unitarily equivalent to $A_{\sigma_1}\oplus B$, with $W(B)$ contained in the interior of $W(A_{\sigma_1})$.  As a result, \cite[Lemma 2.9]{WangWu13} gives that $k(A')=k(A_{\sigma_1}\oplus B) =k(A_{\sigma_1})=2$, where the last equality follows from \cite[Lemma 4.1]{GauWu13}.

At the same time, for $A$, we recall that $b$ and $c$ were chosen with $|b|\neq |c|$.  Therefore, \cite[Theorem 5]{CaRaSeS} gives $k(A)=\lceil n/2 \rceil$.  For $n\ge 5$, this gives that $k(A) \ge 3$.

To summarize, both $\Gamma^\wedge_{F_A}=\Gamma^\wedge_{F_{A'}}$ and $W(A)=W(A')$ hold for the matrices $A$ and $A'$ described above, and yet $k(A)\neq k(A')$ when $n\ge 5$.
\end{proof}

Proposition \ref{prop:toes} illustrates a limitation of the ability of the \associated polynomial and the boundary generating curve to provide information about the Gau--Wu number of a matrix.  The following is an explicit example of this situation, with a graph of the boundary generating curve.

\begin{ex}\label{ex:reducible_irreducible}
    Consider the matrix $A=T_5(0,1,2)$.  By Proposition~\ref{prop:toes}, $A$ is unitarily irreducible but has the same \associated curve as the unitarily reducible matrix
\[
A'=\begin{bmatrix}
     0 & 2 & 0 & 0 & 0 \\
     1 & 0 & 1 & 0 & 0 \\
     0 & 2 & 0 & 2 & 0 \\
     0 & 0 & 1 & 0 & 1 \\
     0 & 0 & 0 & 2 & 0 \\
   \end{bmatrix}.
\]
We show the graph of their boundary generating curve in Figure \ref{fig:5toes}.  Note that the blue point shown at the origin is both an eigenvalue and the boundary generating curve of the $1\times 1$ block in the reduction of $A'$.

\begin{figure}[h]
 \includegraphics[width=2.75in]{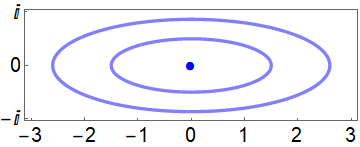}
\hspace{.5in}
\caption{The boundary generating curve for both the unitarily irreducible matrix $A$ and the unitarily reducible matrix $A'$ of Example~\ref{ex:reducible_irreducible}.  Note that $k(A)=3$ while $k(A')=2$.}
\label{fig:5toes}
\end{figure}

Proposition~\ref{prop:toes} demonstrates that $k(A)=3$, while $k(A')=2$.  As discussed in the proof of that proposition, this is because the latter is merely the Gau--Wu number of the $2\times 2$ matrix whose numerical range is the outer ellipse.

Lastly, we note that both $A$ and $A'$ have the same \associated polynomial:  $F_A(x,y,t)=F_{A'}(x,y,t)=(9 x^2 + y^2-4t^2) (27 x^2 + 3 y^2-4t^2)/16$. This polynomial is reducible, even though $A$ is unitarily irreducible.
\end{ex}

In fact, the scenario that occurs for the tridiagonal Toeplitz matrices described in Proposition~\ref{prop:toes} is an example of a more general phenomenon.  In particular, those matrices are examples for which, although $A$ is unitarily irreducible,
its \associated polynomial $F_A$ is reducible and is in fact the same as that of some unitarily reducible matrix $A'$. This happens more generally: For any unitarily irreducible matrix $A$ with reducible \associated polynomial $F_A$, there always exists
a unitarily reducible $A'$ of the same size and with the same \associated polynomial. More precisely, we have the following result, generalizing a result of Helton and Spitkovsky \cite[Theorem 4]{HelSpit}; indeed, that theorem corresponds to the special case of $F=G$ in the theorem below.

\begin{thm}\label{thm:symmetric}
    Let $A\in M_n(\C)$ and let $G$ be a (real) factor of $F_A$.  Then there exists a symmetric matrix $B$ for which $F_B=G$, where the size of $B$ is the degree of $G$.
\end{thm}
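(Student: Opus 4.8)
The plan is to obtain $B$ directly from the recently-proved Lax Conjecture (the Helton--Vinnikov theorem) by exploiting the fact that the \emph{real symmetric} determinantal representation it provides can be repackaged as the real and imaginary parts of a single \emph{complex symmetric} matrix. Concretely, if $G=\det(xS_1+yS_2+tI_d)$ with $S_1,S_2$ real symmetric, then $B:=S_1+iS_2$ satisfies $B^T=B$ and has $\re B=S_1$, $\im B=S_2$, so that $F_B=G$. The entire argument therefore reduces to verifying that $G$ satisfies the hypotheses of the Helton--Vinnikov theorem.

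First I would check hyperbolicity. Since $F_A(x:y:t)=\det(xH_1+yH_2+tI_n)$ is monic of degree $n$ in $t$, any factorization $F_A=G\cdot H$ into real polynomials may be normalized so that $G$ is monic of degree $d:=\deg G$ in $t$; in particular $G(0:0:1)=1$. For each fixed real pair $(x,y)$ the matrix $xH_1+yH_2$ is Hermitian, so $t\mapsto F_A(x,y,t)=\prod_j\bigl(t+\mu_j(x,y)\bigr)$ has only the real roots $-\mu_j(x,y)$. As the monic polynomial $G(x,y,\cdot)$ divides $F_A(x,y,\cdot)$ in $t$, its $d$ roots form a sub-multiset of the roots of $F_A(x,y,\cdot)$ and are thus real as well. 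Hence $G$ is a real homogeneous polynomial of degree $d$ that is hyperbolic with respect to $(0:0:1)$ and satisfies $G(0:0:1)=1$.

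Next I would invoke the Helton--Vinnikov theorem in its monic form: any such $G$ can be written as $G(x:y:t)=\det(xS_1+yS_2+tI_d)$ for real symmetric $d\times d$ matrices $S_1,S_2$. (The general statement produces real symmetric matrices $M_1,M_2,M_3$ with $e_1M_1+e_2M_2+e_3M_3$ positive definite at the hyperbolicity direction $e=(e_1:e_2:e_3)$; taking $e=(0:0:1)$, using $G(e)=1$, and applying a simultaneous congruence normalizes the coefficient matrix of $t$ to $I_d$.) Setting $B:=S_1+iS_2$ then gives a symmetric matrix of size $d=\deg G$ with $\re B=S_1$ and $\im B=S_2$, and therefore $F_B(x:y:t)=\det(xS_1+yS_2+tI_d)=G$, completing the argument.

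The main obstacle is the first step, and specifically two points that must be handled with care: that the monic factor $G$ inherits hyperbolicity from $F_A$ with the correct reference direction $(0:0:1)$, and that the representation supplied by Helton--Vinnikov is genuinely \emph{real} symmetric rather than merely Hermitian. It is precisely the reality of $S_1,S_2$ that makes $B=S_1+iS_2$ complex symmetric; were $S_1,S_2$ only Hermitian, this construction would fail to produce $B^T=B$. Once the hypotheses of the Lax Conjecture are verified, the symmetrization step is routine.
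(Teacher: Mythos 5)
Your proposal is correct and follows essentially the same route as the paper: verify that the real homogeneous factor $G$ inherits hyperbolicity with respect to $(0:0:1)$ from $F_A$, invoke the Lewis--Parrilo--Ramana/Helton--Vinnikov resolution of the Lax Conjecture to write $G=\det(xS_1+yS_2+tI_d)$ with $S_1,S_2$ real symmetric, and set $B=S_1+iS_2$. Your extra care about normalizing $G$ to be monic in $t$ (which is legitimate since $F_A(0,0,1)=1$ forces $G(0,0,1)\neq 0$) is a detail the paper leaves implicit, but it does not change the argument.
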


\begin{proof}
    By Lemma \ref{lem:real}, $F_A$ has only real coefficients. Chien and Nakazato observe that $F_A$ is hyperbolic with respect to $(0,0,1)$, since for all $(w_1,w_2,w_3)\in \R^3$, the polynomial $F_A(w_1,w_2,w_3-t)$ in $t$ has only real roots.  Since roots of $G$ are roots of $F_A$, we see that $G$ is also hyperbolic.  Also, as a factor of the homogenous polynomial $F_A$, $G$ is also homogenous. Let $d=\deg G$.  Since $G$ is homogenous, has real coefficients and is hyperbolic with respect to $(0,0,1)$, Lewis, Parrilo, and Ramana's proof \cite{LPR05} of the Lax Conjecture shows that there exist real symmetric (and thus Hermitian) matrices $C,D\in M_d(\R)$ for which $G=\det(xC+yD+t I_d)$.  Let $B=C+Di$, and note that $C$ is the Hermitian part of $B$ and $Di$ is the skew-Hermitian part of $B$.  Therefore, directly from Definition~\ref{defn:assoccurve}, we have $F_B=G$.  Since $C$ and $D$ are symmetric, so is $B$.
\end{proof}

According to the theorem, if we have a matrix $A$ where $F_A$ decomposes into factors, for each factor we can find a symmetric matrix whose base polynomial is that factor.  By taking the direct sum of these symmetric matrices, we obtain a block diagonal matrix $A'$ that has the same base polynomial and hence the same boundary generating curve and numerical range as $A$. This is summarized in the following corollary, in which we apply this idea to unitarily irreducible matrices.

\begin{cor}\label{cor:reducible}
Let $A$ be a unitarily irreducible matrix with reducible base polynomial $F_A$.  Then there is a block diagonal matrix $A'$, with at least two blocks, which is the same size and has the same numerical range, base polynomial, and boundary generating curve as $A$.
\end{cor}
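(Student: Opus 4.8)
The plan is to reduce this corollary to Theorem \ref{thm:symmetric} by factoring $F_A$ and reassembling the pieces as a direct sum. First I would use Lemma \ref{lem:real} to note that $F_A$ has real coefficients, so it admits a factorization $F_A = G_1 G_2 \cdots G_r$ into real irreducible factors in $\R[x,y,t]$; since $F_A$ is homogeneous, each $G_i$ may be taken homogeneous, and since $F_A$ is assumed reducible we have $r \ge 2$. (If one prefers exactly two blocks, one can instead split $F_A = G \cdot H$ into two nonconstant real homogeneous factors.)

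Next, for each factor $G_i$ I would invoke Theorem \ref{thm:symmetric} to produce a symmetric matrix $B_i$ with $F_{B_i} = G_i$ and with the size of $B_i$ equal to $\deg G_i$. Setting $A' = B_1 \oplus B_2 \oplus \cdots \oplus B_r$, the block count is $r \ge 2$ and the total size is $\sum_i \deg G_i = \deg F_A = n$, matching the size of $A$.

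The key computational step is that the \associated polynomial is multiplicative under direct sums. For $A' = \bigoplus_i B_i$ one has $\re A' = \bigoplus_i \re B_i$ and $\im A' = \bigoplus_i \im B_i$, so that $x\,\re A' + y\,\im A' + t I_n = \bigoplus_i \left(x\,\re B_i + y\,\im B_i + t I_{\deg G_i}\right)$ is block diagonal; taking determinants gives $F_{A'} = \prod_i F_{B_i} = \prod_i G_i = F_A$. Since $F_{A'} = F_A$, the two matrices share the same \associated curve $\Gamma_F$, hence the same dual curve $\Gamma^\wedge_F$, and therefore, by Kippenhahn's result that $W(\cdot)$ is the convex hull of the real affine part of $\Gamma^\wedge_F$, the same numerical range. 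This establishes every claimed conclusion.

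Because Theorem \ref{thm:symmetric} already carries the analytic weight (via the Lewis--Parrilo--Ramana resolution of the Lax Conjecture), the corollary is essentially bookkeeping; the only points requiring care are the existence of a genuine factorization into at least two nonconstant real homogeneous factors (guaranteed by reducibility together with homogeneity) and the multiplicativity of $F$ under direct sums. I expect the mild obstacle to be purely expository: confirming that the real irreducible factors can be chosen homogeneous, which follows from the standard fact that over a field any factorization of a homogeneous polynomial can be taken to consist of homogeneous factors.
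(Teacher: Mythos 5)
Your proposal is correct and follows essentially the same route as the paper: the paper derives the corollary from Theorem \ref{thm:symmetric} in exactly this way, applying the theorem to each factor of $F_A$, taking the direct sum of the resulting symmetric matrices, and using the fact that the base polynomial of a direct sum is the product of the blocks' base polynomials. Your added remarks on homogeneity of the factors and the explicit block-diagonal determinant computation only make explicit what the paper leaves as bookkeeping.
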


The unitarily reducible matrix $A'$ in Corollary~\ref{cor:reducible} may have a smaller Gau-Wu number than $A$ even though the base polynomial is the same.  This was exemplified in Proposition~\ref{prop:toes} for tridiagonal Toeplitz matrices. It is also possible that the Gau--Wu number is the same for $A$ and $A'$:  For any $3\times 3$ unitarily irreducible matrix $A$ with $W(A)$ an ellipse and $k(A)=2$, we have $W(A')$ an ellipse and $k(A')=2$.  While $k(A)$ may or may not equal $k(A')$, Lee has outlined a method for computing the Gau--Wu number for  unitarily reducible matrices $A'$ when $n\leq 4$ \cite{Lee}.

\section{Summary}
In this paper we have provided a characterization of $n\times n$ matrices with $k(A)=n$ based on the singularities of the \associated curve $\Gamma_F$.  However, we have shown that even in the $4\times 4$ case, knowledge of the singularities of $\Gamma_F$ is not sufficient to completely determine $k(A)$.  Moreover, we have proved that for $n \ge 5$, no amount of knowledge of the base polynomial or \associated curve is sufficient to do so.  Any classification of $5\times 5$ matrices will necessarily have additional complexity, as then matrices with Gau--Wu numbers which are not maximal may have $k(A)=2$, $k(A)=3$, or $k(A)=4$.  Further tools will be necessary to differentiate non-maximal cases.  Lastly, we have analyzed the numerical range of unitarily irreducible matrices with reducible \associated curve.

\bibliographystyle{amsplain}
\bibliography{master}

\end{document}